\newtheorem{theorem}{Theorem}
\newtheorem{corollary}{Corollary}
\newtheorem{lemma}{Lemma}
\begin{document}

\title{\bf A Diophantine inequality involving different powers of primes of the form {\boldmath$[n^c]$}}

\author{\bf S. I. Dimitrov}

\date{}

\maketitle

\begin{abstract}
Let $[\, x\,]$  denote the integer part of a real number $x$.
Assume that $\lambda_1, \lambda_2, \lambda_3$ are nonzero real numbers, not all of the same sign, that $\lambda_1/\lambda_2$ is irrational, and that $\eta$ is real. 
Let $\frac{219}{220}<\gamma<1$ and $\theta>0$.
We establish that, there exist  infinitely many triples of primes $p_1,\, p_2,\, p_3$ satisfying the inequality
\begin{equation*}
|\lambda_1p_1 + \lambda_2p_2 + \lambda_3p^4_3+\eta|<\big(\max \{p_1, p_2, p^4_3\}\big)^{\frac{219-220\gamma}{208}+\theta}
\end{equation*}
and such that $p_i=[n_i^{1/\gamma}]$, $i=1,\,2,\,3$.\\
\quad\\
\textbf{Keywords}: Diophantine inequality, Piatetski-Shapiro primes.\\
\quad\\
{\bf  2020 Math.\ Subject Classification}: 11D75  $\cdot$  11P32
\end{abstract}

\section{Introduction and statement of the result}
\indent

Diophantine equations and inequalities with three prime numbers are a primary object of study in analytic number theory.
The common feature in the approach to solving them is the circle method.
Let $\lambda_1, \lambda_2, \lambda_3$ be non-zero real numbers, not all of the same sign, $\lambda_1/\lambda_2$ irrational and $\eta$ real.
We consider the Diophantine inequality
\begin{equation}\label{kth}
|\lambda_1p_1+\lambda_2p_2+\lambda_3p^k_3+\eta|<\big(\max \{p_1, p_2, p^k_3\}\big)^{-\omega+\delta}\,,
\end{equation}
where $\omega>0$, $\delta>0$ and $k\geq1$ is an integer.
The solvability of inequality \eqref{kth} for infinitely many prime triples $p_1,\,p_2,\,p_3$ has already been proved.
After extensive research, the best known results to date for $k\geq1$ are as follows:
for $k=1$, Matom\"{a}ki \cite{Mato} established $\omega=\frac{2}{9}$; for $k=2$, Gambini, Languasco and Zaccagnini \cite{Gambini} proved $\omega=\frac{1}{12}$;
for $k=3$,  Gambini, Languasco and Zaccagnini \cite{Gambini} obtained $\omega=\frac{1}{24}$; for $4\leq k\leq5$, Mu and Qu \cite{Mu} proved $\omega=\frac{5}{6k2^k}$,
and for $k\geq6$, Mu and Qu \cite{Mu} established $\omega=\frac{20}{21k2^k}$.

An additional interesting problem involves the study of Diophantine inequalities with primes of a special type.
In 1953, Piatetski-Shapiro \cite{Shapiro1953} showed that for any fixed $\frac{11}{12}<\gamma<1$, there are infinitely many primes of the form $p = [n^{1/\gamma}]$.
Such primes are referred to as Piatetski-Shapiro primes of type $\gamma$. 
Subsequently, several authors sharpened the permissible range of $\gamma$, with the best known result to date due to Rivat and Wu \cite{Rivat-Wu} with $\frac{205}{243}<\gamma<1$.
In 2022, the author \cite{Dimitrov2022} studied inequality \eqref{kth} for $k=1$ over the set of Piatetski–Shapiro primes.
More precisely, we proved that, for any fixed $\frac{37}{38}<\gamma<1$, the inequality \eqref{kth} is solvable with infinitely many primes $p_i=[n_i^{1/\gamma}]$, $i=1,\,2,\,3$. 
Very recently, the author established the solvability of inequality \eqref{kth} with Piatetski-Shapiro primes, for $k=2$ in \cite{Dimitrov2025b} and $k=3$ in \cite{Dimitrov2026}.
Motivated by these results, we consider inequality \eqref{kth} for $k=4$ with Piatetski-Shapiro primes.  
\begin{theorem}\label{Theorem}
Suppose that $\lambda_1, \lambda_2,\lambda_3$ are nonzero real numbers, not all of the same sign, that $\lambda_1/\lambda_2$ is irrational, and that $\eta$ is real. 
Let $\frac{219}{220}<\gamma<1$ and $\theta>0$.
Then there exist infinitely many ordered triples of Piatetski-Shapiro primes $p_1,\,p_2,\,p_3$ of type $\gamma$ such that
\begin{equation*}
|\lambda_1p_1+\lambda_2p_2+\lambda_3p^4_3+\eta|<\big(\max \{p_1, p_2, p^4_3\}\big)^{\frac{219-220\gamma}{208}+\theta}\,.
\end{equation*}
\end{theorem}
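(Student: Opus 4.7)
The plan is to apply the Davenport--Heilbronn variant of the circle method, adapted to the Piatetski--Shapiro setting by inserting the indicator $c_\gamma(n) = \lfloor -n^\gamma \rfloor - \lfloor -(n+1)^\gamma \rfloor$ of the set $\{[n^{1/\gamma}]\}$. Choose parameters $X_1, X_2, X_3$ tending to infinity at compatible rates, set $X = \max\{X_1, X_2, X_3^{4}\}$, and fix $\tau = X^{-(219-220\gamma)/208-\theta}$. Define the weighted exponential sums
\begin{equation*}
S_j(\alpha) = \sum_{X_j < p \le 2X_j} (\log p)\, c_\gamma(p)\, e(\alpha p) \quad (j=1,2), \qquad S_3(\alpha) = \sum_{X_3 < p \le 2X_3} (\log p)\, c_\gamma(p)\, e(\alpha p^{4}),
\end{equation*}
and the Fej\'er-type kernel $K(\alpha) = \bigl(\sin(\pi\tau\alpha)/(\pi\alpha)\bigr)^{2}$, which is concentrated on $|\alpha| \le \tau^{-1}$ and has total mass $\tau$. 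Then
\begin{equation*}
J = \int_{-\infty}^{\infty} S_1(\lambda_1\alpha)\, S_2(\lambda_2\alpha)\, S_3(\lambda_3\alpha)\, K(\alpha)\, e(\eta\alpha)\, d\alpha
\end{equation*}
counts, with logarithmic weights, Piatetski--Shapiro prime triples satisfying the desired inequality, up to a factor controllable by the kernel mass.

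Split $\mathbb{R}$ into a \emph{major arc} $\mathfrak{M} = \{|\alpha| \le \tau_0\}$ with $\tau_0$ a small negative power of $X$, a \emph{minor arc} $\mathfrak{m} = \{\tau_0 < |\alpha| \le H\}$ with $H$ a small positive power of $X$, and a \emph{trivial arc} $\mathfrak{t} = \{|\alpha| > H\}$. On $\mathfrak{M}$ I would replace $c_\gamma(p)$ by its mean density $\gamma p^{\gamma-1}$ via the Vaaler approximation, apply partial summation and the prime number theorem to each $S_j$, and integrate against $K(\alpha)e(\eta\alpha)$. The hypothesis that $\lambda_1,\lambda_2,\lambda_3$ are not all of the same sign guarantees the main term is positive and of order $\gamma^{3}\tau X_1^{\gamma}X_2^{\gamma}X_3^{4\gamma-3}$ (up to a constant depending on the $\lambda_i$), which will dominate every error term provided the minor arc bound is sharp enough. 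The trivial arc is disposed of by the standard pointwise estimate $|K(\alpha)| \ll \tau^{-1}\alpha^{-2}$ together with trivial bounds on the $S_j$.

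The core of the argument is the minor arc estimate. On $\mathfrak{m}$ I would use Matom\"aki-style $L^{2}$ bounds of the shape
\begin{equation*}
\int_{-\infty}^{\infty} |S_j(\lambda_j\alpha)|^{2}\, K(\alpha)\, d\alpha \ll \tau X_j^{2\gamma-1}\log^{A} X_j \quad (j=1,2),
\end{equation*}
proved via the explicit formula and Vaaler's trigonometric polynomials (as in \cite{Dimitrov2022,Dimitrov2025b,Dimitrov2026}), to absorb two of the three sums. This reduces the remaining task to a pointwise estimate $\sup_{\alpha \in \mathfrak{m}}|S_3(\lambda_3\alpha)|$. For the latter, expand $c_\gamma(p)$ through Vaaler's approximation into a sum of oscillatory factors $e(\beta p^{\gamma})$ with $\beta$ bounded, then apply Heath--Brown's identity to decompose $S_3$ into Type I and Type II bilinear sums involving the mixed phase $\alpha p^{4} + \beta p^{\gamma}$. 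Optimizing the exponent pair applied to these bilinear sums in the spirit of Rivat--Wu \cite{Rivat-Wu} will yield a saving $|S_3(\lambda_3\alpha)| \ll X_3^{\gamma-\kappa}$ for some explicit $\kappa = \kappa(\gamma) > 0$.

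The main obstacle will be the quartic Piatetski--Shapiro exponential sum $S_3$: the quartic phase $e(\alpha p^{4})$ interacts with the $\gamma$-phase $e(\beta p^{\gamma})$ in a more delicate way than in the cubic and quadratic analogues, and the balancing of the Type I/Type II ranges in Heath--Brown's decomposition is what ultimately produces the numerical threshold $219/220$ and the admissible exponent $(219-220\gamma)/208$. Precisely, the exponent pair computation determines the largest $\kappa$ for which the minor arc bound beats the major arc main term, and this condition fails for $\gamma \le 219/220$. Once the exponent $\kappa$ is confirmed by carefully tracking the derivatives of the composite phase and invoking the relevant $k$-th derivative estimate (van der Corput's process applied sufficiently many times), the remainder of the proof — combining major, minor, and trivial contributions and removing the logarithmic weights — follows the routine route.
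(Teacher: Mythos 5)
Your outline correctly identifies the Davenport--Heilbronn framework with a Piatetski--Shapiro twist, but the minor-arc plan is where it departs from the paper's actual route and, as written, breaks down.

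The decisive gap is your proposed uniform pointwise estimate $\sup_{\alpha\in\mathfrak{m}}|S_3(\lambda_3\alpha)|\ll X_3^{\gamma-\kappa}$. No such bound can hold over the whole Davenport--Heilbronn minor arc $\{\tau_0<|\alpha|\le H\}$: for $\alpha$ with $\lambda_3\alpha$ extremely close to a rational $a/q$ with small $q$, the quartic sum $S_3$ does not oscillate and is essentially of trivial size $X_3^{\gamma}$. Heath--Brown's identity and exponent-pair/van der Corput estimates only give savings when $\lambda_3\alpha$ is of ``minor-arc type'' in the classical (rational approximation) sense, which is not guaranteed throughout $\mathfrak{m}$. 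This is precisely why the paper never attempts a pointwise estimate for the quartic sum. Instead it proves an $L^{16}$ moment bound (Lemma~\ref{IntS^16} and Corollary~\ref{IntS4^16}) by a purely elementary iterated differencing and divisor-function argument, and then applies H\"older to leverage the average saving.

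Relatedly, your minor-arc treatment nowhere visibly uses the irrationality of $\lambda_1/\lambda_2$, yet this is the engine that makes the minor arc manageable. The paper's Lemma~\ref{mathfrakSest} gives the pointwise bound
\begin{equation*}
\min\bigl\{|S_1(\lambda_1 t)|,\,|S_1(\lambda_2 t)|\bigr\}\ll X_j^{\frac{37-12\gamma}{26}}\log^5 X_j
\end{equation*}
along a subsequence $X_j$ coming from the convergents of $\lambda_1/\lambda_2$, and it is the factor $\mathfrak{S}(t,X)^{1/8}$ pulled out in \eqref{Gamma2est1} that produces the crucial saving. The $L^2$ bounds you mention absorb the remaining sums, but by themselves they cannot control $\Gamma_2$: the mean square of $S_1$ over $[\Delta,H]$ is too large without first extracting the minimum.

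Finally, your attribution of the threshold $\gamma>219/220$ and the exponent $(219-220\gamma)/208$ to an exponent-pair optimization for the ``composite quartic-$\gamma$ phase'' is not what happens. These numbers fall out of combining the $1/8$-th power of the minimum bound, $X^{(37-12\gamma)/208}$, with the H\"older-balanced $L^2$--$L^2$--$L^{16}$ contribution $HX^{(17-8\gamma)/8}$, giving $\Gamma_2\ll X^{(479-220\gamma)/208+\delta}$; demanding that this be $o(\varepsilon X^{5/4})$ yields $(479-220\gamma)/208<5/4$, i.e., $\gamma>219/220$. So the arithmetic lives in the H\"older bookkeeping, not in a van der Corput iteration.

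A cosmetic difference worth noting: you use a Fej\'er kernel, the paper uses the Piatetski--Shapiro compactly supported bump $\theta$ with Fourier transform $\Theta$ satisfying the superpolynomial decay of Lemma~\ref{Fourier}. The latter makes the trivial arc essentially free after choosing $k=[\log X]$; with the Fej\'er kernel (quadratic decay) the trivial arc would require more care, though it is still manageable. Likewise the paper weights $S_k$ by $p^{1-\gamma}$ rather than carrying $c_\gamma(p)$ directly, which normalizes the main term to order $X^{5/4}$; that's a convenience, not a structural issue.
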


\section{Notations}
\indent

The letter $p$ will always denote a prime number. By $\delta$ we denote an arbitrarily small positive number, not necessarily the same in different occurrences.
Further $[t]$ and $\{t\}$ denote the integer part and the fractional part of $t$, respectively.
As usual, $\tau _k(n)$ denotes the number of solutions of the equation $m_1m_2\cdots m_k$ $=n$ in natural numbers $m_1,\ldots,m_k$.
In addition, we write $\tau (n)=\tau_2(n)$.
Moreover $\psi(t)=\{t\}-\frac{1}{2}$ and $e(t)=e^{2\pi it}$.
Let $\gamma$, $\theta$ and $\lambda_0$ be a real constants such that $\frac{219}{220}<\gamma<1$, $\theta>0$ and $0<\lambda_0<1$.
Since $\lambda_1/\lambda_2$ is irrational, there are infinitely many different convergents
$a_0/q_0$ to its continued fraction, with $\big|\frac{\lambda_1}{\lambda_2} - \frac{a_0}{q_0}\big|<\frac{1}{q_0^2}\,, (a_0, q_0) = 1\,, a_0\neq0$ and $q_0$ is arbitrary large.
Denote
\begin{align}
\label{X}
&X=q_0^\frac{13}{6}\,;\\
\label{Delta}
&\Delta=X^{-\frac{12}{13}}\log X\,;\\
\label{varepsilon}
&\varepsilon=X^{\frac{219-220\gamma}{208}+\theta}\,;\\
\label{H}
&H=\frac{\log^2X}{\varepsilon}\,;\\
\label{Sk}
&S_k(t)=\sum\limits_{\lambda_0X<p^k\leq X\atop{p=[n^{1/\gamma}]}}p^{1-\gamma}e(t p^k)\log p,;\\
\label{Sigma}
&\Sigma_k(t)=\sum\limits_{\lambda_0X<p^k\leq X}e(t p^k)\log p\,;\\
\label{U}
&U_k(t)=\sum\limits_{\lambda_0X<n^k\leq X}e(t n^k)\,;\\
\label{Omega}
&\Omega_k(t)=\sum\limits_{\lambda_0X<p^k\leq X}p^{1-\gamma}\big(\psi(-(p+1)^\gamma)-\psi(-p^\gamma)\big)e(t p^k)\log p\,;
\end{align}

\begin{align}
\label{Ik}
&I_k(t)=\int\limits_{(\lambda_0X)^\frac{1}{k}}^{X^\frac{1}{k}}e(t y^k)\,dy\,.
\end{align}

\section{Auxiliary lemmas}
\indent

\begin{lemma}\label{Fourier} Let $\varepsilon>0$ and $k\in \mathbb{N}$.
There exists a function $\theta(y)$ which is $k$ times continuously differentiable and
such that
\begin{align*}
&\theta(y)=1\hspace{12.5mm}\mbox{for }\hspace{5mm}|y|\leq 3\varepsilon/4\,;\\
&0<\theta(y)<1\hspace{5mm}\mbox{for}\hspace{7mm}3\varepsilon/4 <|y|< \varepsilon\,;\\
&\theta(y)=0\hspace{12.5mm}\mbox{for}\hspace{7mm}|y|\geq \varepsilon\,.
\end{align*}
and its Fourier transform
\begin{equation*}
\Theta(x)=\int\limits_{-\infty}^{\infty}\theta(y)e(-xy)dy
\end{equation*}
satisfies the inequality
\begin{equation*}
|\Theta(x)|\leq\min\bigg(\frac{7\varepsilon}{4},\frac{1}{\pi|x|},\frac{1}{\pi |x|}
\bigg(\frac{k}{2\pi |x|\varepsilon/8}\bigg)^k\bigg)\,.
\end{equation*}
\end{lemma}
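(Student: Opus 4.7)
I would realise $\theta$ by Vinogradov's classical convolution smoothing: convolve the indicator of a slightly shorter interval with several copies of a narrow normalised box function, so that the Fourier transform factors as a product of sinc kernels and the three bounds drop out of elementary sinc estimates.

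Concretely, set $\alpha=7\varepsilon/8$ and $\beta=\varepsilon/(8k)$, and define
\begin{equation*}
\psi(y)=\chi_{[-\alpha,\alpha]}(y),\qquad \phi(y)=\frac{1}{2\beta}\chi_{[-\beta,\beta]}(y),
\end{equation*}
and then
\begin{equation*}
\theta(y)=\bigl(\psi*\underbrace{\phi*\phi*\cdots*\phi}_{k\ \text{copies}}\bigr)(y).
\end{equation*}
I would then verify the three pointwise properties using that $\phi^{*k}$ has total mass $1$ and support $[-k\beta,k\beta]=[-\varepsilon/8,\varepsilon/8]$. The support of $\theta$ is $[-\alpha-k\beta,\alpha+k\beta]=[-\varepsilon,\varepsilon]$; for $|y|\leq\alpha-k\beta=3\varepsilon/4$ every translate $y-u$ with $u\in\mathrm{supp}(\phi^{*k})$ lies in $[-\alpha,\alpha]$, so $\psi(y-u)=1$ and $\theta(y)=\int\phi^{*k}=1$; and for $3\varepsilon/4<|y|<\varepsilon$ only a proper portion of the mass of $\phi^{*k}$ is captured by $\psi$, giving a value strictly between $0$ and $1$. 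The required regularity is obtained from the familiar fact that iterated convolution with compactly supported bounded factors raises smoothness by one order at each step: $\phi^{*k}$ is a B-spline of order $k$ and convolving once more with the box $\psi$ pushes the regularity into the class demanded by the statement.

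For the Fourier transform I would use $\widehat{\chi_{[-a,a]}}(x)=\sin(2\pi ax)/(\pi x)$ and convolution-to-product to obtain
\begin{equation*}
\Theta(x)=\frac{\sin(2\pi\alpha x)}{\pi x}\left(\frac{\sin(2\pi\beta x)}{2\pi\beta x}\right)^{k}.
\end{equation*}
The three inequalities now follow by routine sinc calculus. First, $|\Theta(x)|\leq\|\theta\|_{L^1}\leq 2\alpha\cdot 1=7\varepsilon/4$. Second, bounding the leading factor by $|\sin(2\pi\alpha x)|/(\pi|x|)\leq 1/(\pi|x|)$ and every sinc in the product by $1$ yields $|\Theta(x)|\leq 1/(\pi|x|)$. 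Third, estimating each of the $k$ sinc factors by $|\sin(2\pi\beta x)/(2\pi\beta x)|\leq 1/(2\pi\beta|x|)=k/(2\pi|x|\varepsilon/8)$ and combining with the leading $1/(\pi|x|)$ delivers the third inequality.

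The main obstacle is not in any individual step but in the bookkeeping: reconciling the exact regularity class ``$k$ times continuously differentiable'' with the precise exponent $k$ and the constant $\varepsilon/8$ appearing in the third bound. This forces a careful choice of both the number of copies of $\phi$ and the width $\beta$, because any adjustment changes both the smoothness of $\theta$ and the number and size of the sinc factors in $\Theta$ simultaneously; all remaining work is standard.
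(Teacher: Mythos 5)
The paper does not prove this lemma at all; it simply cites Piatetski-Shapiro's 1952 paper, so your explicit construction is by default a different (and more self-contained) route. Your construction is the standard one, and the analytic core is correct: the support and pointwise properties follow exactly as you say from $\alpha-k\beta=3\varepsilon/4$ and $\alpha+k\beta=\varepsilon$, and the factorisation $\Theta(x)=\frac{\sin(2\pi\alpha x)}{\pi x}\big(\frac{\sin(2\pi\beta x)}{2\pi\beta x}\big)^{k}$ yields all three bounds with precisely the stated constants, since $\|\theta\|_{L^1}=2\alpha=7\varepsilon/4$ and $1/(2\pi\beta|x|)=k/(2\pi|x|\varepsilon/8)$.

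The one genuine gap is the smoothness count, which you flag as ``bookkeeping'' but do not resolve, and it does not come out in your favour as written. A convolution of $m$ box functions is a piecewise polynomial of degree $m-1$ lying in $C^{m-2}$ but not in $C^{m-1}$; your $\theta=\psi*\phi^{*k}$ is a convolution of $k+1$ boxes and hence is $C^{k-1}$, one order short of the claimed ``$k$ times continuously differentiable.'' The case $k=1$ makes this concrete: your $\theta$ is then the trapezoid function, which is not $C^{1}$. Inserting one more factor $\phi$ (so $k+1$ copies of width $\varepsilon/(8(k+1))$) restores $C^{k}$ regularity but changes the third bound to $\frac{1}{\pi|x|}\big(\frac{k+1}{2\pi|x|\varepsilon/8}\big)^{k+1}$, which is not pointwise dominated by the stated $\frac{1}{\pi|x|}\big(\frac{k}{2\pi|x|\varepsilon/8}\big)^{k}$ for small $|x|$; so you cannot simultaneously hit the exact regularity class and the exact constant with this family without some further adjustment. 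For the purposes of this paper the discrepancy is harmless --- the differentiability of $\theta$ is never invoked elsewhere, only the three Fourier bounds and the pointwise properties are used in estimating $\Gamma_1,\Gamma_2,\Gamma_3$ --- but as a proof of the lemma as literally stated, you should either prove the $C^{k-1}$ variant and note that it suffices, or accept the slightly different constant that comes with the extra convolution.
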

\begin{proof}
See (\cite{Shapiro1952}).
\end{proof}

\begin{lemma}\label{Shapiroasymp} For any fixed $\frac{2426}{2817}<\gamma<1$, we have
\begin{equation*}
\sum\limits_{p\leq X\atop{p=[n^{1/\gamma}]}}1\sim \frac{X^\gamma}{\log X}\,.
\end{equation*}
\end{lemma}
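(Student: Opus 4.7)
The plan is to convert the indicator of ``$p$ is a Piatetski-Shapiro prime of type $\gamma$'' into an arithmetic expression involving the $\psi$-function, and then split the resulting sum into a smooth main term and an oscillatory error term that has to be beaten by exponential-sum estimates.

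First I would observe that, for $\gamma<1$ and $p$ sufficiently large, $(p+1)^\gamma - p^\gamma<1$, so the interval $[p^\gamma,(p+1)^\gamma)$ contains at most one integer. A prime $p$ is of the form $[n^{1/\gamma}]$ precisely when that interval does contain one. Counting integers in this interval via floor functions and applying $\lfloor t\rfloor = t-\tfrac12-\psi(t)$ leads to the identity
\begin{equation*}
\mathbf{1}_{\{p=[n^{1/\gamma}]\}} = (p+1)^\gamma - p^\gamma + \psi(-(p+1)^\gamma)-\psi(-p^\gamma),
\end{equation*}
so summing over primes $p\le X$ gives a clean decomposition of the counting function into two pieces.

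Next I would handle the main term. By the mean value theorem $(p+1)^\gamma-p^\gamma = \gamma p^{\gamma-1}+O(p^{\gamma-2})$, and Abel summation combined with the prime number theorem produces
\begin{equation*}
\sum_{p\le X}\gamma p^{\gamma-1} \sim \frac{X^\gamma}{\log X},
\end{equation*}
which is already the asserted order of magnitude. It then remains to show that the oscillatory contribution $\sum_{p\le X}\bigl(\psi(-(p+1)^\gamma)-\psi(-p^\gamma)\bigr)$ is $o(X^\gamma/\log X)$.

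The core of the argument is this last step. I would invoke Vaaler's approximation of $\psi$ by a trigonometric polynomial of length $H$, with a controlled Beurling-type error, thereby reducing the problem to bounding linear exponential sums over primes
\begin{equation*}
\sum_{p\le X} e(h p^\gamma),\qquad 1\le h\le H,
\end{equation*}
uniformly in $h$. Applying Heath--Brown's identity (or Vaughan's identity) would decompose these into Type I and Type II bilinear sums, and I would then apply Robert--Sargos--type exponent-pair estimates to the Type II sums and van der Corput--style bounds (or the Kusmin--Landau inequality) to the Type I sums, optimizing the cutoff and choosing $H$ of order $X^{1-\gamma}\log^{O(1)}X$. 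The hard part — and the step that dictates the explicit threshold $\gamma>\tfrac{2426}{2817}$ — is precisely this optimization: the threshold is forced by the admissible Type I/II exponents available for sums of the shape $\sum_{m\sim M,\,n\sim N}a_m b_n\,e(hm^\gamma n^\gamma)$, and is where all the technical work is concentrated. Once those bilinear bounds are in hand, combining them with the Vaaler approximation and the main term above yields the stated asymptotic.
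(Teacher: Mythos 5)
The paper does not prove this lemma at all: it is quoted verbatim from Rivat--Sargos (Theorem 1 of \emph{Nombres premiers de la forme $[n^c]$}), so there is no internal argument to compare against. Your outline is the standard proof strategy of that theorem and is correct as far as it goes: the identity $\mathbf{1}_{\{p=[n^{1/\gamma}]\}}=[-p^\gamma]-[-(p+1)^\gamma]=(p+1)^\gamma-p^\gamma+\psi(-(p+1)^\gamma)-\psi(-p^\gamma)$ is exactly the one the paper itself uses later (in the proof of Lemma \ref{S2asymptotic}), the main term $\sum_{p\le X}\gamma p^{\gamma-1}\sim X^\gamma/\log X$ follows by partial summation and the prime number theorem as you say, and the reduction of the $\psi$-sum via a Vaaler-type approximation to sums $\sum_{p\le X}e(hp^\gamma)$, decomposed by Vaughan's or Heath--Brown's identity into Type I and Type II sums, is precisely how Rivat and Sargos proceed. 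What you have not supplied is the part that actually constitutes the theorem: the specific Type I estimate (Rivat--Sargos derive it from the Robert--Sargos bound on the number of near-solutions of $x_1^\alpha+x_2^\alpha=x_3^\alpha+x_4^\alpha$, not from a generic exponent pair) and the numerical optimization that yields the exponent $\tfrac{2426}{2817}$ rather than some other threshold. You acknowledge this openly, and since the paper itself delegates exactly this work to the citation, your proposal is an accurate roadmap rather than a self-contained proof; to make it complete you would have to either reproduce the Robert--Sargos machinery or, as the author does, simply cite the result.
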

\begin{proof}
See (\cite{Rivat-Sargos}, Theorem 1).
\end{proof}

\begin{lemma}\label{intSintI} We have
\begin{align*}
&\emph{(i)}\quad\quad\quad\int\limits_{-\Delta}^\Delta|S_1(t)|^2\,dt\ll X\log^3X\,,\quad\quad\int\limits_{0}^1|S_1(t)|^2\,dt\ll X^{2-\gamma}\log X\,,\\
&\emph{(ii)}\quad\quad\quad\int\limits_{-\Delta}^\Delta|I_1( t)|^2\,dt\ll X\,.
\end{align*}
\end{lemma}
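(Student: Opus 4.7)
The plan is to handle the three estimates separately: (ii) and the second bound in (i) are direct applications of Parseval/Plancherel, while the first bound in (i) is where the real work lies.

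For (ii), $I_1(t)$ is the Fourier transform of the characteristic function of $(\lambda_0 X, X]$, so Plancherel on $\mathbb{R}$ gives $\int_{-\infty}^{\infty}|I_1(t)|^2\,dt = (1-\lambda_0)X$, and restricting to $[-\Delta, \Delta]$ only shrinks the integral. For the second bound in (i), I would apply Parseval's identity on $[0,1]$ to write $\int_0^1 |S_1(t)|^2\,dt = \sum_p p^{2(1-\gamma)}\log^2 p$ summed over Piatetski--Shapiro primes in $(\lambda_0 X, X]$, bound the summand by $X^{2-2\gamma}\log^2 X$, and invoke Lemma \ref{Shapiroasymp} for a count of at most $\ll X^\gamma/\log X$ such primes.

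For the first bound in (i), I would begin with the smooth-plus-$\psi$ decomposition
\[ p^{1-\gamma}\mathbf{1}_{p=[n^{1/\gamma}]} = \gamma + O(1/p) + p^{1-\gamma}\bigl(\psi(-(p+1)^\gamma) - \psi(-p^\gamma)\bigr), \]
valid for large $p$, which yields $S_1(t) = \gamma\tilde{\Sigma}_1(t) + \tilde{\Omega}_1(t) + O(1)$, where $\tilde\Sigma_1$ and $\tilde\Omega_1$ are the $k=1$ analogues of $\Sigma$ in \eqref{Sigma} and $\Omega$ in \eqref{Omega}. For the smooth term, Gallagher's lemma reduces $\int_{-\Delta}^\Delta |\tilde\Sigma_1(t)|^2\,dt$ to $\Delta^2 \int_{-\infty}^{\infty} \bigl|\sum_{x < p \leq x + 1/(2\Delta)} \log p\bigr|^2\,dx$. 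Since $1/(2\Delta) = X^{12/13}/(2\log X)$, Brun--Titchmarsh bounds the inner sum by $O(1/\Delta)$ uniformly in $x$, and integrating over an $O(X)$-length support gives a contribution $\ll X$.

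The main obstacle is the oscillating $\tilde\Omega_1$ piece, which requires genuine cancellation from $\psi$. I would expand $\psi$ by the Vaaler approximation $\psi(y) = \sum_{0<|h|\leq H} a_h e(hy) + \text{tail}$ with $|a_h| \ll 1/|h|$, taking $H$ of the size in \eqref{H}. This reduces $\int_{-\Delta}^\Delta |\tilde\Omega_1(t)|^2\,dt$ to weighted $L^2$ integrals of bilinear exponential sums $\sum_p e(-hp^\gamma + tp)\log p$, each estimated by a Vaughan-type identity that splits them into Type I and Type II pieces, followed by Weyl shifts and successive applications of van der Corput's process. The hypothesis $\gamma > 219/220$ is precisely what forces the arithmetic to close at the target $X\log^3 X$: it provides the exponent slack that balances the Type II losses at the critical ranges of $h$, and any weaker value would destroy that balance.
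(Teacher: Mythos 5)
The paper's own proof of this lemma is a bare citation (to Dimitrov 2022, Lemma~6 for (i) and to Vaughan 1974, Lemma~8 for (ii)), so there is no in-paper argument to compare against line by line. Your treatment of (ii) via Plancherel and of the second bound in (i) via Parseval on $[0,1]$ together with Lemma~\ref{Shapiroasymp} is correct and standard.

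For the first bound in (i), your proposal goes badly off course. The smooth piece via Gallagher plus Brun--Titchmarsh is fine, but the entire $\tilde\Omega_1$ machinery (Vaaler, Vaughan's identity, Weyl shifts, van der Corput) is both unjustified as written --- it is a name-drop with no indication of how the $L^2$ average over $|t|\le\Delta$ interacts with the bilinear pieces, nor what exponent pair closes the argument --- and, more to the point, unnecessary. The bound follows directly from Gallagher's lemma together with the elementary density of Piatetski--Shapiro \emph{integers} in short intervals, with no decomposition of the indicator at all. Indeed, for $x\asymp X$ and $h=1/(2\Delta)$, the number of integers $m\in(x,x+h]$ of the form $[n^{1/\gamma}]$ is $\ll (x+h)^\gamma-x^\gamma+1\ll hX^{\gamma-1}+1\ll hX^{\gamma-1}$, so
\[
\sum_{\substack{x<p\le x+h\\ p=[n^{1/\gamma}]}}p^{1-\gamma}\log p\ll X^{1-\gamma}\log X\cdot hX^{\gamma-1}=h\log X\ll X^{12/13},
\]
and hence Gallagher gives $\int_{-\Delta}^{\Delta}|S_1(t)|^2\,dt\ll\Delta^2\cdot X\cdot X^{24/13}=X\log^2X$, which is already stronger than the stated $X\log^3X$. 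This holds for every $\gamma$ bounded away from $0$; no cancellation in $\psi$ is needed at this stage.

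This also exposes the incorrect claim in your final sentence: the hypothesis $\gamma>\frac{219}{220}$ has nothing to do with making this lemma close. That numerology arises later in the paper (from the $k=4$ sum $S_4$, Lemma~\ref{Omegaest}, and the balancing in Sections~\ref{SectionGamma2} and~\ref{Sectionfinal}), not from the $L^2$ moment of $S_1$. Attributing the $\gamma$-constraint to this lemma misidentifies where the real constraint in the argument lives.
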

\begin{proof} 
For (i) see (\cite{Dimitrov2022}, Lemma 6). For (ii) see (\cite{Vaughan1974}, Lemma 8).
\end{proof}

\begin{lemma}\label{S1I1asymptotic} Let $|t|\leq\Delta$ and $\frac{11}{12}<\gamma<1$. Then the asymptotic formula
\begin{equation*}
S_1(t)=\gamma  I_1(t)+ \mathcal{O}\left(\frac{X}{e^{(\log X)^{1/5}}}\right)
\end{equation*}
holds.
\end{lemma}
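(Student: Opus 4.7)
The plan is to separate the Piatetski--Shapiro condition from the exponential weight inside $S_1(t)$. Since $p=[n^{1/\gamma}]$ for some integer $n$ is equivalent to $\lfloor -p^\gamma\rfloor-\lfloor -(p+1)^\gamma\rfloor=1$, the identity $\lfloor x\rfloor=x-\psi(x)-\tfrac12$ together with the Taylor expansion $(p+1)^\gamma-p^\gamma=\gamma p^{\gamma-1}+O(p^{\gamma-2})$ yields
\[
S_1(t)=\gamma\!\!\sum_{\lambda_0X<p\le X}\!e(tp)\log p\;+\;\Omega_1(t)\;+\;O(\log X),
\]
where $\Omega_1(t)=\sum_{\lambda_0X<p\le X}p^{1-\gamma}\bigl(\psi(-(p+1)^\gamma)-\psi(-p^\gamma)\bigr)e(tp)\log p$ is the $k=1$ analogue of the $\Omega(t)$ in \eqref{Omega}. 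It therefore suffices to establish, uniformly in $|t|\le\Delta$, the asymptotic $\sum_{\lambda_0X<p\le X}e(tp)\log p=I_1(t)+O(Xe^{-(\log X)^{1/5}})$ and the bound $\Omega_1(t)\ll Xe^{-(\log X)^{1/5}}$.

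The first relation I would derive from the prime number theorem with the Vinogradov--Korobov error, via the truncated explicit formula for $\theta(y)$. Writing the sum as a Stieltjes integral against $d\theta$ and substituting $\theta(y)=y-\sum_{|\gamma|\le T}y^\rho/\rho+O(yT^{-1}\log^2(yT))$, each nontrivial zero contributes an oscillatory integral $\int_{\lambda_0X}^X e(tu)u^{\rho-1}du$ controlled by integration by parts. The truncation $T$ must be taken as a small positive power of $X$ (a polylog cut-off is insufficient, because $|t|X$ can be as large as $X^{1/13}\log X$ and a naive Abel summation against the PNT remainder only gives $O((1+|t|X)X\exp(-c\log^{3/5}X))$, which fails); with $T$ so chosen the Vinogradov--Korobov zero-free region pushes the total error well inside $Xe^{-(\log X)^{1/5}}$.

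The harder part is bounding $\Omega_1(t)$. I would apply Vaaler's truncated Fourier expansion of $\psi$ with a cut-off $H_0$ (to be optimised as a small power of $X$), converting $\Omega_1(t)$ into a linear combination of sums
\[
\sum_{\lambda_0X<p\le X}p^{1-\gamma}\log p\cdot e\bigl(tp\pm h(p+1)^\gamma\bigr)
\]
and the analogous sums with $\pm hp^\gamma$, for $1\le h\le H_0$. Each inner sum is a Vinogradov exponential sum over primes with a smooth phase, which Vaughan's identity decomposes into Type~I and Type~II bilinear forms handled by van der Corput's method or Rivat--Sargos exponent-pair estimates. The main obstacle is that the extra linear term $tp$ shifts any stationary-phase point, so the derivative tests must be applied to the full phase $tp\pm h(p+1)^\gamma$ uniformly in $t\in[-\Delta,\Delta]$ and $h\le H_0$; the resulting Type~II exponent constraints, together with the Vaaler truncation error, are exactly what force the admissible range of $\gamma$. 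With the prescribed range $\tfrac{219}{220}<\gamma<1$ and an appropriate choice of $H_0$, both the truncation error and the bilinear sums fall well inside $Xe^{-(\log X)^{1/5}}$, completing the proof.
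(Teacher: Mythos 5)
The paper gives no proof of this lemma at all—it simply cites Dimitrov (2022), Lemma 5—so there is no internal argument to compare against. That said, your outline is the standard (and almost certainly the cited) approach: express the Piatetski--Shapiro indicator as $[-p^\gamma]-[-(p+1)^\gamma]$, split off $(p+1)^\gamma-p^\gamma\approx\gamma p^{\gamma-1}$ to cancel the weight $p^{1-\gamma}$ and produce $\gamma\sum e(tp)\log p$, and isolate the $\psi$-difference into an error sum $\Omega_1(t)$; then approximate $\sum e(tp)\log p$ by $I_1(t)$ via the explicit formula, and bound $\Omega_1(t)$ by Vaaler's expansion plus Vaughan/van der Corput exponential-sum machinery. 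This matches exactly how the paper handles the $k=4$ analogue in Lemma~\ref{S2asymptotic}, so the identification is sound, and your observation that naive Abel summation fails because $|t|X$ can be as large as $X^{1/13}\log X$ (forcing a power-of-$X$ truncation $T$ in the explicit formula, controlled via Vinogradov--Korobov and zero-density to beat the $(|t|X)^{1/2}$ arising from the stationary points $|\gamma|\asymp|t|X$) is a genuine and correctly diagnosed subtlety.

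The one substantive caveat is that the hard technical content---showing that, uniformly for $|t|\le\Delta$ and $h\le H_0$, the Type I/II bilinear forms with phase $tp\pm h(p+1)^\gamma$ actually save a power of $X$ under $1-\gamma$ small---is only described, not carried out. That is precisely where the admissible range of $\gamma$ (here inherited from the cited $37/38<\gamma<1$, a fortiori for $219/220<\gamma<1$) is determined, so at present this is a correct plan rather than a self-contained proof. Given that the paper itself outsources the lemma to the reference, that level of detail is acceptable, but you should be aware that every concrete inequality you would need lives in that unverified step.
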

\begin{proof}
See (\cite{Dimitrov2022}, Lemma 5).
\end{proof}

\begin{lemma}\label{Omegaest} Let $\frac{99}{100}<\gamma<1$. Then
\begin{equation*}
\Omega_4(t)\ll X^{\frac{149-50\gamma}{398}+\delta}\,.
\end{equation*}
\end{lemma}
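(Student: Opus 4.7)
The plan is to expand each $\psi$-value in $\Omega(t)$ by a truncated Vaaler-type Fourier series and then estimate the resulting exponential sums over primes by combining Heath--Brown's identity with the theory of exponent pairs. Fix a truncation parameter $H_0=X^{\delta}$; there exist coefficients $a_h$ with $|a_h|\ll 1/|h|$ such that
\begin{equation*}
\psi(-(p+1)^\gamma)-\psi(-p^\gamma)=\sum_{0<|h|\leq H_0}a_h\bigl(e(-h(p+1)^\gamma)-e(-hp^\gamma)\bigr)+R(p),
\end{equation*}
where $R(p)$ is controlled by a non-negative Beatty-type majorant whose contribution to $\Omega(t)$ is admissible. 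Applying the mean-value theorem one has $e(-h(p+1)^\gamma)-e(-hp^\gamma)=-2\pi i h\gamma p^{\gamma-1}e(-hp^\gamma)+O(h^2p^{2\gamma-2})$, so that the factor $hp^{\gamma-1}$ cancels the outer weight $p^{1-\gamma}$ while $h$ is absorbed by $|a_h|$. The required estimate for $\Omega(t)$ thus reduces, up to a $\log X$ factor from the summation in $h$, to a bound uniform in $t\in\mathbb{R}$ and $1\leq h\leq H_0$ on
\begin{equation*}
T_h(t)=\sum_{(\lambda_0X)^{1/4}<p\leq X^{1/4}}e\bigl(tp^4-hp^\gamma\bigr)\log p
\end{equation*}
of the shape $T_h(t)\ll X^{(149-50\gamma)/398+\delta}$.

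Next I would apply Heath--Brown's identity to $T_h(t)$, decomposing it into $O(\log^{O(1)} X)$ bilinear sums of Type I and Type II shape with dyadic parameters $M,N$ satisfying $MN\asymp X^{1/4}$. In the Type I sums $\sum_{m\sim M}a_m\sum_{n\sim N}e(t(mn)^4-h(mn)^\gamma)$ with $a_m\ll\tau_k(m)$ and $M$ below a suitable threshold, the inner sum in $n$ is estimated by Kuzmin--Landau or van der Corput's $A$-process, exploiting the nonvanishing second derivative of the phase. The Type II sums, with general coefficients $a_m$ and $b_n$, are handled by Cauchy--Schwarz followed by Weyl--van der Corput differencing and then a carefully chosen exponent pair $(\kappa,\lambda)$ applied to the resulting two-parameter exponential sum. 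Balancing the two contributions and optimising the differencing parameter should produce the target estimate for $T_h(t)$.

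The main obstacle will be the Type II estimate. The phase $tx^4-hx^\gamma$ has two competing oscillatory terms which can partially cancel near a critical point, and the exponent pair bound must be uniform in $t\in\mathbb{R}$ and in $1\leq h\leq H_0$ and strong enough to overcome the trivial bound $X^{1/4}$ by a power of $X$ of size $(50\gamma-49.5)/398$. The denominator $398$ in the stated exponent strongly suggests that the author uses an exponent pair obtained by applying the $B$-process to a low-order classical pair; the precise choice must render the estimate nontrivial exactly at the boundary $\gamma=99/100$, which is why the hypothesis of the lemma takes that specific form.
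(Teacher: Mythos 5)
The paper does not prove this lemma at all: the stated ``proof'' is a citation to Lemma~9 of \cite{Dimitrov2025a}, so there is no in-paper argument to compare you against, and I can only judge your sketch on its own terms and against the standard method.

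Your strategy (truncated Vaaler expansion of each $\psi$, linearization of the increment $(p+1)^\gamma-p^\gamma$ so that the $p^{1-\gamma}$ weight cancels, Heath--Brown decomposition into Type~I/Type~II sums, exponent-pair estimates) is the standard and almost certainly the intended one. There is, however, a concrete gap in the linearization as you wrote it. After multiplying your expansion $e(-h(p+1)^\gamma)-e(-hp^\gamma)=-2\pi i h\gamma p^{\gamma-1}e(-hp^\gamma)+O(h^2p^{2\gamma-2})$ by the weight $p^{1-\gamma}$, the error contributes $O(h^2p^{\gamma-1})$ per prime, hence $O(h^2X^{\gamma/4})$ over $p\sim X^{1/4}$, and after the $h$-sum (with $|a_h|\ll1/|h|$ and $H_0=X^\delta$) you are left with $O(X^{\gamma/4+2\delta})$. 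Since $\gamma/4>(149-50\gamma)/398$ exactly when $\gamma>298/299\approx0.99666$, in part of the lemma's hypothesized range $\gamma\in(99/100,1)$ this trivial treatment of the quadratic Taylor term already exceeds the target bound. The fix is standard: write $e(-h(p+1)^\gamma)-e(-hp^\gamma)=-2\pi ih\gamma\int_p^{p+1}u^{\gamma-1}e(-hu^\gamma)\,du$, factor out $e(-hp^\gamma)$ and a slowly varying complex weight $\omega_{p,h}$ of total variation $O(hX^{(\gamma-1)/4})=o(1)$, and remove $\omega_{p,h}$ by partial summation rather than by the triangle inequality; one must not discard the second-order term as a trivial $O$-error. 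Finally, your proposal stops short of the actual Type~I/Type~II computations and does not identify the exponent pair that produces the denominator $398$; those computations are the entire content of the cited lemma, so what you have is a correct plan with one quantitative slip rather than a proof.
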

\begin{proof}
See (\cite{Dimitrov2025a}, Lemma 9).
\end{proof}

\begin{lemma}\label{Languasco} Let $k\geq1$ and $1/2X\leq Y\leq 1/2X^{1-\frac{5}{6k}+\delta}$. Then there exists a positive constant $c_1(\delta)$, which does not depend on $k$, such that
\begin{equation*}
\int\limits_{-Y}^Y\big|\Sigma_k(t)-U_k(t)\big|^2\,dt\ll\frac{X^{\frac{2}{k}-2}\log^2X}{Y}+Y^2X+X^{\frac{2}{k}-1}\mathrm{exp}\Bigg(-c_1\bigg(\frac{\log X}{\log\log X}\bigg)^{1/3}\Bigg)   \,.
\end{equation*}
\end{lemma}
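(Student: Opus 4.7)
My plan is to prove this lemma by combining Gallagher's mean-value inequality with a Saffari--Vaughan-type mean-square estimate for primes in short intervals, in the spirit of the Languasco--Zaccagnini treatment of ternary additive problems. First I would write $\Sigma(t) - U(t) = \sum_{m} c_m e(tm)$, where $c_m$ is supported on $k$-th powers $m = n^k \in (\lambda_0 X, X]$, with $c_{p^k} = \log p - 1$ and $c_{n^k} = -1$ for composite $n$. Gallagher's lemma then yields
\[
\int_{-Y}^Y |\Sigma(t) - U(t)|^2 \, dt \ll Y^2 \int_{\mathbb{R}} \bigg|\sum_{|m - u| \leq 1/(2Y)} c_m\bigg|^2 du.
\]
Substituting $m = n^k$ and $v = u^{1/k}$, the inner sum localises to an interval $(v, v+h]$ of length $h \asymp X^{1/k-1}/Y$ and equals $R(u) = E(v,h) + O(1)$, where $E(v,h) = \vartheta(v+h) - \vartheta(v) - h$ is the short-interval PNT error and $\vartheta(x) = \sum_{p \leq x} \log p$ is the Chebyshev function. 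The $O(1)$ part, squared and integrated over $u \in (\lambda_0 X, X]$, contributes $O(X)$ to the inner integral, which after multiplication by the $Y^2$ Gallagher factor accounts for the $Y^2 X$ term of the stated bound.

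For the $E(v,h)^2$ contribution I would invoke the Saffari--Vaughan mean-square estimate for primes in short intervals in its Heath-Brown--Goldston form. In the admissible range for $h$ (which, under $V_0 = X^{1/k}$, corresponds to the hypothesised upper bound on $Y$), one has
\[
\int_{V_0/2}^{V_0} |E(v,h)|^2 \, dv \ll A(h, V_0)\log^2 V_0 + h^2 V_0 \exp\bigg(-c_1 \bigg(\frac{\log V_0}{\log\log V_0}\bigg)^{1/3}\bigg),
\]
where the first summand arises from the Brun--Titchmarsh/Selberg--Goldston side and the exponentially-decaying summand comes from combining the Vinogradov--Korobov zero-free region with strong zero-density estimates for $\zeta$. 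Converting back from $v$ to $u$ via the Jacobian $\asymp X^{1-1/k}$ and multiplying by the $Y^2$ Gallagher factor then yields the remaining two terms $X^{2/k-2}\log^2 X/Y$ and $X^{2/k-1}\exp(-c_1(\cdots)^{1/3})$ of the stated bound, once $A$ is taken with the expected $V_0^2/h$-type scaling.

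The main technical obstacle is the short-interval mean-square step: one needs to combine the Vinogradov--Korobov zero-free region with Huxley-- and Heath-Brown-type zero-density estimates to extract the cube-root decay rate, while verifying that the implied constant $c_1$ is independent of $k$. Uniformity in $k$ is automatic in this set-up, since $k$ enters only through the scale changes $V_0 = X^{1/k}$ and $h \asymp X^{1/k-1}/Y$, and not through any analytic input on $\zeta$. Once the mean-value bound is in place, the remainder of the proof is a bookkeeping exercise of collecting the contributions through the Jacobian-weighted integrals and the $Y^2$ factor from Gallagher.
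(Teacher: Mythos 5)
The paper offers no proof of this lemma beyond the citation to Lemmas 1 and 2 of Gambini--Languasco--Zaccagnini, and your reconstruction follows exactly the architecture of that cited argument: Gallagher's lemma to convert $\int_{-Y}^{Y}|\Sigma(t)-U(t)|^{2}\,dt$ into $Y^{2}$ times a mean square of short sums, the substitution $m=n^{k}$ producing windows of length $h\asymp X^{1/k-1}/Y$, and the Saffari--Vaughan mean-square estimate driven by the Vinogradov--Korobov zero-free region and zero-density results, with $c_{1}$ independent of $k$ because $k$ enters only through the rescaling. Your accounting for the $Y^{2}X$ term (the $O(1)$ errors in the interior windows) and for the term $X^{2/k-1}\exp(-c_{1}(\log X/\log\log X)^{1/3})$ (from $\int|E(v,h)|^{2}\,dv\ll h^{2}V_{0}\exp(-c_{1}(\cdot)^{1/3})$, valid for $h\geq V_{0}^{1/6+\delta}$, which is exactly what the upper bound on $Y$ guarantees) is correct.

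Your derivation of the first term $X^{2/k-2}\log^{2}X/Y$, however, does not work. You attribute it to a first summand $A(h,V_{0})\log^{2}V_{0}$ in the short-interval mean-square estimate with $A$ of size $V_{0}^{2}/h$; pushing that through the Jacobian $\asymp X^{1-1/k}$ and the Gallagher factor $Y^{2}$ gives $Y^{2}\cdot X^{1-1/k}\cdot(V_{0}^{2}/h)\log^{2}V_{0}\asymp Y^{3}X^{2}\log^{2}X$, which for $Y$ near the top of its admissible range exceeds the stated first term by a factor $Y^{4}X^{4-2/k}\asymp X^{4/(3k)}$ and is not absorbed by the other two terms either. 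No version of the Saffari--Vaughan bound over the full dyadic range produces the term in question. It arises instead from the endpoint effect in Gallagher's lemma, which your sketch omits: for the set of $u$ of measure $\ll 1/Y$ whose window $(u,u+1/(2Y)]$ overlaps an endpoint of the summation range $(\lambda_{0}X,X]$, the short sum must be bounded trivially by $O(h\log X)$ via Chebyshev/Brun--Titchmarsh, contributing $Y^{2}\cdot(1/Y)\cdot(h\log X)^{2}\asymp Yh^{2}\log^{2}X\asymp X^{2/k-2}\log^{2}X/Y$. Adding this boundary analysis (and dropping the spurious $V_{0}^{2}/h$ summand) closes the gap; the rest of your plan is sound.
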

\begin{proof}
See (\cite{Gambini}, Lemma 1 and Lemma 2).
\end{proof}

\begin{lemma}\label{mathfrakSest} Let $\frac{11}{12}<\gamma<1$ and $\Delta\leq|t|\leq H$. Then there exists a sequence of real numbers $X_1,\,X_2,\ldots \to \infty $ such that
\begin{equation*}
\min\Big\{\big|S_1(\lambda_{1}t)\big|,\big|S_1(\lambda_2 t)\big|\Big\}\ll X_j^{\frac{37-12\gamma}{26}}\log^5X_j\,,\quad j=1,2,\dots\,.
\end{equation*}
\end{lemma}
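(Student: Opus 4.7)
The plan is to follow the Matom\"{a}ki-type min-sum argument used in the author's earlier treatments for smaller $k$; see \cite{Dimitrov2022}, \cite{Dimitrov2025b}, \cite{Dimitrov2026}. The sequence $X_j\to\infty$ is produced by letting $q_0$ run through the denominators of convergents of $\lambda_1/\lambda_2$ and setting $X_j=q_{0,j}^{13/6}$ for each such convergent; it therefore suffices, for a single fixed choice of $q_0$ (and hence of $X$, $\Delta$, $H$), to prove the stated bound uniformly for $\Delta\le|t|\le H$.

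First I decompose $S_1$ into a smooth main term and a Piatetski-Shapiro correction. Writing the indicator of $p=[n^{1/\gamma}]$ via $\psi$-smoothing and rearranging gives $S_1(t)=\gamma T(t)+R_1(t)$, where $T(t)=\sum_{\lambda_0X<p\le X}e(tp)\log p$ is the ordinary von Mangoldt-weighted prime exponential sum, and $R_1(t)$ is bounded by an analogue of Lemma \ref{Omegaest} for $k=1$ (following \cite{Dimitrov2022}), yielding a contribution strictly smaller than the target for $\gamma>11/12$. The task therefore reduces to bounding $\min\{|T(\lambda_1t)|,|T(\lambda_2t)|\}$.

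The key tool is the classical Vinogradov-Vaughan estimate: for any reduced rational $a/q$ with $|\alpha-a/q|\le 1/(qQ)$, one has $T(\alpha)\ll (Xq^{-1/2}+X^{4/5}+X^{1/2}q^{1/2})\log^c X$. Assume for contradiction that both $|T(\lambda_it)|$ exceed $X^{(37-12\gamma)/26}\log^5X$. Then the $X^{4/5}$ term is automatically harmless (since $(37-12\gamma)/26>4/5$ for $\gamma<27/20$), and Dirichlet's theorem applied with $Q=X^{(24-12\gamma)/13}$ forces $q_i\ll X^{(12\gamma-11)/13}$ together with $\|q_i\lambda_it\|\ll X^{-(24-12\gamma)/13}$ for $i=1,2$.

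The final step combines the two approximations: the quantity $a_1q_2\lambda_2-a_2q_1\lambda_1$ is bounded using the identity $\lambda_2(\lambda_1t)-\lambda_1(\lambda_2t)=0$ together with the two individual error terms, producing a rational approximation to $\lambda_1/\lambda_2$ whose denominator is $\ll|t|q_1q_2\le HX^{2(12\gamma-11)/13}$, strictly below $q_0=X^{6/13}$, and whose accuracy surpasses the bound $1/(q\cdot q_0)$ that a non-convergent would be forced to respect. Since $a_0/q_0$ is a best approximation of the second kind to $\lambda_1/\lambda_2$, this is impossible, giving the required contradiction. The principal obstacle is precisely this final numerical check: one must verify, using the specific choice $X=q_0^{13/6}$, that the derived rational is genuinely sharper than what the convergent $a_0/q_0$ provides while also having strictly smaller denominator. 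This tight interplay is exactly what pins down both the exponent $13/6$ in $X=q_0^{13/6}$ and the Vinogradov exponent $(37-12\gamma)/26$ appearing in the target bound.
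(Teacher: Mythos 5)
The paper's ``proof'' is a citation to Lemma~7 of \cite{Dimitrov2022}, and your framework correctly reconstructs the likely structure of that proof: write each $S_1$ as $\gamma T(t)+\Omega_1(t)+\mathcal{O}(1)$ via the identity $[-p^\gamma]-[-(p+1)^\gamma]=(p+1)^\gamma-p^\gamma+\psi(-(p+1)^\gamma)-\psi(-p^\gamma)$ (exactly as in the proof of Lemma~\ref{S2asymptotic} here, but with $k=1$), bound the Piatetski--Shapiro correction $\Omega_1$ uniformly in $t$ by an exponent-pair argument, and estimate $\min\{|T(\lambda_1t)|,|T(\lambda_2t)|\}$ by the classical Vaughan/Matom\"aki min-argument using the convergents of $\lambda_1/\lambda_2$ and $X=q_0^{13/6}$. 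That is the right skeleton.

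However, you have the roles of the two pieces reversed. You assert that the $\Omega_1$-contribution is ``strictly smaller than the target for $\gamma>11/12$'' and that the exponent $(37-12\gamma)/26$ is a ``Vinogradov exponent'' pinned down by the convergent-versus-denominator numerology in the min-argument. This cannot be right: $T(t)=\sum_{\lambda_0X<p\le X}e(tp)\log p$ contains no $\gamma$ at all, so no manipulation of the Vaughan bound $T(\alpha)\ll(Xq^{-1/2}+X^{4/5}+X^{1/2}q^{1/2})\log^cX$ together with Dirichlet's theorem and the choice $q_0=X^{6/13}$ can produce a $\gamma$-dependent exponent. The $\gamma$ in $(37-12\gamma)/26$ enters only through $\Omega_1$, whose bound is obtained by Fourier-expanding $\psi$ and applying exponent pairs to $\sum_p p^{1-\gamma}e(\pm hp^\gamma+tp)\log p$; indeed the threshold $\gamma>\frac{11}{12}$ is precisely the condition $(37-12\gamma)/26<1$, i.e.\ the condition that the $\Omega_1$-estimate beats the trivial bound. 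The min-argument for $T$ yields a strictly smaller, $\gamma$-independent exponent (something like $X^{23/26+\delta}$ with $q_0=X^{6/13}$), which is then absorbed. So the ``tight interplay'' you describe does fix the exponent $13/6$, but it is the Piatetski--Shapiro correction, not the convergent argument, that produces $(37-12\gamma)/26$. The consequence is that your sketch of the contradiction step is aimed at too weak a target: you assume both $|T(\lambda_it)|$ exceed $X^{(37-12\gamma)/26}\log^5X$, but the actual argument needs (and gets) a smaller bound for $\min|T|$, while the visible exponent in the Lemma is inherited from $\Omega_1$. This misattribution would not break a careful write-up, but as stated it conflates the two sources and obscures why the hypothesis $\gamma>\frac{11}{12}$ appears at all.
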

\begin{proof}
See (\cite{Dimitrov2022}, Lemma 7).
\end{proof}

\begin{lemma}\label{Ikest} We have
\begin{equation*}
I_k(t)\ll X^{\frac{1}{k}-1}\min\Big(X,\, |t|^{-1}\Big)\,.
\end{equation*}
\end{lemma}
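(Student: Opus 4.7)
The claim combines two very different bounds under a single $\min$, so the plan is to prove each factor of the $\min$ separately by the appropriate elementary tool and then take the smaller.

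For the first factor, I would simply use the trivial modulus bound. Since $|e(ty^k)|=1$, the integral is bounded by the length of the interval of integration:
\begin{equation*}
|I_k(t)|\le\int\limits_{(\lambda_0X)^{1/k}}^{X^{1/k}}dy=(1-\lambda_0^{1/k})X^{1/k}\ll X^{1/k}=X^{\frac{1}{k}-1}\cdot X.
\end{equation*}

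For the second factor I would apply the first-derivative test (van der Corput's Lemma A): if $f$ is real-valued and $f'$ is monotone with $|f'|\ge\Lambda$ throughout $[a,b]$, then $\big|\int_a^b e(f(y))\,dy\big|\ll\Lambda^{-1}$. Taking $f(y)=ty^k$, we have $f'(y)=kty^{k-1}$, which is monotone in $y>0$. On the interval of integration $y\ge(\lambda_0X)^{1/k}$, so
\begin{equation*}
|f'(y)|\ge k|t|(\lambda_0X)^{\frac{k-1}{k}}\gg|t|X^{1-\frac{1}{k}}.
\end{equation*}
The first-derivative test therefore yields $|I_k(t)|\ll X^{\frac{1}{k}-1}|t|^{-1}$. (Equivalently, one can substitute $u=y^k$ to write $I_k(t)=k^{-1}\int_{\lambda_0X}^X e(tu)u^{1/k-1}\,du$ and integrate by parts once against the monotone factor $u^{1/k-1}$; both the boundary terms and the error integral are $\ll X^{1/k-1}|t|^{-1}$.)

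Combining the two bounds gives the stated estimate $I_k(t)\ll X^{1/k-1}\min(X,|t|^{-1})$. There is no real obstacle here; the only thing to be careful about is to apply the first-derivative test on an interval where $f'$ has a fixed sign and a useful lower bound, which both hold automatically because the integration variable is bounded away from $0$ by $(\lambda_0X)^{1/k}$ with $\lambda_0>0$ fixed.
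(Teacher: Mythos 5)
Your proposal is correct and is essentially the paper's argument: the paper simply cites Titchmarsh's Lemma 4.2, which is exactly the first-derivative test you apply (combined with the trivial length bound for small $|t|$), and your verification that $|f'(y)|\gg |t|X^{1-1/k}$ on the range of integration is the only detail that needs checking.
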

\begin{proof}
See (\cite{Titchmarsh}, Lemma 4.2).
\end{proof}

\section{Initial steps}
\indent

Consider the sum
\begin{equation}\label{Gamma}
\Gamma(X)=\sum\limits_{\lambda_0X<p_1,p_2,p^4_3\leq X\atop{p_i=[n^{1/\gamma}_i],\, i=1,2,3}}\theta(\lambda_1p_1+\lambda_2p_2+\lambda_3p^4_3+\eta)p^{1-\gamma}_1p^{1-\gamma}_2p^{1-\gamma}_3\log p_1\log p_2\log p_3\,.
\end{equation}
Using the inverse Fourier transform for the function $\theta(x)$, we have
\begin{align*}
\Gamma(X)&=\sum\limits_{\lambda_0X<p_1,p_2,p^4_3\leq X\atop{p_i=[n^{1/\gamma}_i],\, i=1,2,3}}p^{1-\gamma}_1p^{1-\gamma}_2p^{1-\gamma}_3\log p_1\log p_2\log p_3\\
&\times\int\limits_{-\infty}^{\infty}\Theta(t)e\big((\lambda_1p_1+\lambda_2p_2+\lambda_3p^4_3+\eta)t\big)\,dt\\
&=\int\limits_{-\infty}^{\infty}\Theta(t)S_1(\lambda_1t)S_1(\lambda_2t)S_4(\lambda_3t)e(\eta t)\,dt\,.
\end{align*}
We write $\Gamma(X)$ as the sum of three integrals
\begin{equation}\label{Gammadecomp}
\Gamma(X)=\Gamma_1(X)+\Gamma_2(X)+\Gamma_3(X)\,,
\end{equation}
where
\begin{align}
\label{Gamma1}
&\Gamma_1(X)=\int\limits_{|t|<\Delta}\Theta(t)S_1(\lambda_1t)S_1(\lambda_2t)S_4(\lambda_3t)e(\eta t)\,dt\,,\\
\label{Gamma2}
&\Gamma_2(X)=\int\limits_{\Delta\leq|t|\leq H}\Theta(t)S_1(\lambda_1t)S_1(\lambda_2t)S_4(\lambda_3t)e(\eta t)\,dt\,,\\
\label{Gamma3}
&\Gamma_3(X)=\int\limits_{|t|>H}\Theta(t)S_1(\lambda_1t)S_1(\lambda_1t)S_4(\lambda_3t)e(\eta t)\,dt\,.
\end{align}
We shall estimate $\Gamma_1(X),\,\Gamma_2(X)$ and $\Gamma_3(X)$, respectively, in the Sections \ref{SectionGamma1}, \ref{SectionGamma2} and \ref{SectionGamma3}.
In Section \ref{Sectionfinal} we shall finalize the proof of Theorem \ref{Theorem}.

\section{Lower bound for $\mathbf{\Gamma_1(X)}$}\label{SectionGamma1}
\indent

\begin{lemma}\label{S4asymptotic} Let $\frac{99}{100}<\gamma<1$. Then
\begin{equation*}
S_4(t)=\gamma\Sigma_4(t)+\mathcal{O}\left(X^{\frac{149-50\gamma}{398}+\delta}\right)\,.
\end{equation*}
\end{lemma}
\begin{proof}
By \eqref{Sk}, \eqref{Sigma}, \eqref{Omega} and the well-known asymptotic formula
\begin{equation*}
(p+1)^\gamma-p^\gamma=\gamma p^{\gamma-1}+\mathcal{O}\left(p^{\gamma-2}\right)
\end{equation*}
we get

\begin{align}\label{S4est1}
S_4(t)&=\sum\limits_{\lambda_0X<p^4\leq X}p^{1-\gamma}\big([-p^\gamma]-[-(p+1)^\gamma]\big)e(t p^4)\log p\nonumber\\
&=\sum\limits_{\lambda_0X<p^4\leq X}p^{1-\gamma}\big((p+1)^\gamma-p^\gamma\big)e(t p^4)\log p\nonumber\\
&+\sum\limits_{\lambda_0X<p^4\leq X}p^{1-\gamma}\big(\psi(-(p+1)^\gamma)-\psi(-p^\gamma)\big)e(t p^4)\log p\nonumber\\
&=\gamma\Sigma_4(t)+\Omega_4(t)+\mathcal{O}(1)\,.
\end{align}
In view of \eqref{S4est1} and Lemma \ref{Omegaest}, we establish the statement in the lemma.
\end{proof}
Put
\begin{equation}\label{JX}
J(X)=\gamma^3\int\limits_{|t|<\Delta}\Theta(t)I_1(\lambda_1t)I_1(\lambda_2t)I_4(\lambda_3t)e(\eta t)\,dt\,.
\end{equation}
Using \eqref{Delta}, \eqref{Sk}, \eqref{Sigma}, \eqref{U}, \eqref{Ik}, \eqref{Gamma1}, \eqref{JX}, Cauchy's inequality, 
Lemma \ref{Fourier}, Lemma \ref{intSintI}, Lemma \ref{S1I1asymptotic}, Lemma \ref{Ikest} and Lemma \ref{S4asymptotic}, we deduce
\begin{align*}
\Gamma_1(X)-J(X)
&=\gamma^2\int\limits_{|t|<\Delta}\Theta(t)\Big(S_1(\lambda_1t)-\gamma I_1(\lambda_1t)\Big)I_1(\lambda_2t)I_4(\lambda_3t)e(\eta t)\,dt\nonumber\\
&+\gamma\int\limits_{|t|<\Delta}\Theta(t)S_1(\lambda_1t)\Big(S_1(\lambda_2t)-\gamma I_1(\lambda_2t)\Big)I_4(\lambda_3t)e(\eta t)\,dt\nonumber\\
&+\int\limits_{|t|<\Delta}\Theta(t)S_1(\lambda_1t)S_1(\lambda_2t)\Big(S_4(\lambda_3t)-\gamma I_4(\lambda_3t)\Big)e(\eta t)\,dt\nonumber\\
&\ll\varepsilon \frac{X}{e^{(\log X)^{1/5}}}\left[\Bigg(\int\limits_{|t|<\Delta}\big|I_1(\lambda_2t)\big|^2\,dt\Bigg)^\frac{1}{2}\Bigg(\int\limits_{|t|<\Delta}\big|I_4(\lambda_3t)\big|^2\,dt\Bigg)^\frac{1}{2}\right.\nonumber\\
&\left.+\Bigg(\int\limits_{|t|<\Delta}\big|S_1(\lambda_1t)\big|^2\,dt\Bigg)^\frac{1}{2}\Bigg(\int\limits_{|t|<\Delta}\big|I_4(\lambda_3t)\big|^2\,dt\Bigg)^\frac{1}{2}\right]\nonumber\\
&+\varepsilon \int\limits_{|t|<\Delta}\big|S_1(\lambda_1t)\big|\big|S_1(\lambda_2t)\big|\Big|\gamma\Sigma_4(\lambda_3t)-\gamma I_4(\lambda_3t)+\mathcal{O}\left(X^{\frac{149-50\gamma}{398}+\delta}\right)\Big|\,dt\nonumber\\
&\ll\varepsilon\frac{(X\log X)^\frac{3}{2}}{e^{(\log X)^{1/5}}}\Bigg(\int\limits_{|t|<\Delta}\bigg(\frac{X^{\frac{1}{4}-1}}{|t|}\bigg)^2\,dt\Bigg)^\frac{1}{2}+\varepsilon\frac{X^\frac{5}{4}}{e^{(\log X)^{1/5}}} \nonumber\\
\end{align*}

\begin{align}\label{Gamma1-JX}
&+\varepsilon \int\limits_{|t|<\Delta}\big|S_1(\lambda_1t)\big|\big|S_1(\lambda_2t)\big|\big|\Sigma_4(\lambda_3t)-U_4(\lambda_3t)\big|\,dt\nonumber\\
&+\varepsilon \int\limits_{|t|<\Delta}\big|S_1(\lambda_1t)\big|\big|S_1(\lambda_2t)\big|\big|U_4(\lambda_3t)-I_4(\lambda_3t)\big|\,dt\nonumber\\
&\ll\varepsilon\Bigg(\frac{X^\frac{5}{4}}{e^{(\log X)^{1/5}}}+J_1+J_2\Bigg)\,,
\end{align}
say. Now  Cauchy's inequality, Lemma \ref{Shapiroasymp}, Lemma \ref{intSintI} and Lemma \ref{Languasco} yield 
\begin{align}\label{J1est}
J_1&\ll X\Bigg(\int\limits_{-\Delta}^\Delta\big|S_1(\lambda_1t)\big|^2\,dt\Bigg)^\frac{1}{2}\Bigg(\int\limits_{-\Delta}^\Delta\big|\Sigma_4(\lambda_3t)-U_4(\lambda_3t)\big|^2\,dt\Bigg)^\frac{1}{2}\ll\frac{X^\frac{5}{4}}{e^{(\log X)^{1/5}}}\,.
\end{align}
By Euler's summation formula, we have
\begin{equation}\label{I-U}
I_4(t)-U_4(t)\ll1+|t|X\,.
\end{equation}
Taking into account \eqref{I-U}, Cauchy's inequality and Lemma \ref{intSintI}, we obtain 
\begin{align}\label{J2est}
J_2&\ll (1+\Delta X)\Bigg(\int\limits_{-\Delta}^\Delta\big|S_1(\lambda_1t)\big|^2\,dt\Bigg)^\frac{1}{2}\Bigg(\int\limits_{-\Delta}^\Delta\big|S_1(\lambda_2t)\big|^2\,dt\Bigg)^\frac{1}{2}\ll\Delta X^2\log^3X\,.
\end{align}
On the other hand for the integral defined by \eqref{JX}, we get
\begin{equation}\label{JXest}
J(X)=B(X)+\Phi\,,
\end{equation}
where
\begin{equation*}
B(X)=\gamma^3\int\limits_{-\infty}^{\infty}\Theta(t)I_1(\lambda_1t)I_1(\lambda_2t)I_4(\lambda_3t)e(\eta t)\,dt
\end{equation*}
and
\begin{equation}\label{Phi}
\Phi\ll\int\limits_{\Delta}^{\infty }|\Theta(t)||I_1(\lambda_1t)I_1(\lambda_2t)I_4(\lambda_3t)|\,dt\,.
\end{equation}
Working as in (\cite{Dimitrov2015}, Lemma 4), we deduce that if $\lambda_0$ is sufficiently small, then
\begin{equation}\label{BXest}
B(X)\gg\varepsilon X^\frac{5}{4}\,.
\end{equation}
Using \eqref{Phi}, Lemma \ref{Fourier} and Lemma \ref{Ikest}, we derive
\begin{equation}\label{Phiest}
\Phi\ll\frac{\varepsilon X^\frac{1}{4}}{\Delta}\,.
\end{equation}
Bearing in mind \eqref{Delta}, \eqref{Gamma1-JX}, \eqref{J1est}, \eqref{J2est}, \eqref{JXest}, \eqref{BXest} and \eqref{Phiest}, we establish 
\begin{equation}\label{Gamma1est}
\Gamma_1(X)\gg\varepsilon X^\frac{5}{4}\,.
\end{equation}

\section{Upper bound for $\mathbf{\Gamma_2(X)}$}\label{SectionGamma2}
\indent

\begin{lemma}\label{IntS^16} Let
\begin{equation*}
\mathscr{P}=\{p \;|\; p\leq X\,, \; p= [n^{1/\gamma}]\}\,,   \qquad S(t)=\sum\limits_{p\in \mathscr{P}}e(t p^4)\,.
\end{equation*}
Then
\begin{equation*}
\int\limits_{0}^1|S(t)|^{16}\,dt\ll |\mathscr{P}|^{12+\delta}\,,
\end{equation*}
where $|\mathscr{P}|$ denotes the cardinality of $\mathscr{P}$.
\end{lemma}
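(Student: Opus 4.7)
The plan is to establish the Hua-type bound by decomposing the characteristic function of Piatetski-Shapiro primes into a smooth main piece and a fractional-part error piece, and then estimating each via Hua's lemma for primes combined with standard exponential sum techniques. First, by Parseval's identity,
$$\int_{0}^{1}|S(t)|^{16}\,dt=\#\bigl\{(p_{1},\dots,p_{16})\in\mathscr{P}^{16}:\,p_{1}^{4}+\cdots+p_{8}^{4}=p_{9}^{4}+\cdots+p_{16}^{4}\bigr\}\,,$$
so the task is a direct counting problem over $\mathscr{P}$.

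Next I would use the identity
$$\mathbf{1}_{p\in\mathscr{P}}=[-p^{\gamma}]-[-(p+1)^{\gamma}]=\gamma p^{\gamma-1}+\psi(-(p+1)^{\gamma})-\psi(-p^{\gamma})+O(p^{\gamma-2})\,,$$
as in the proof of Lemma \ref{S2asymptotic}, to split $S(t)=S_{0}(t)+R(t)+O(X^{\gamma-1})$, where
$$S_{0}(t):=\gamma\sum_{p\leq X}p^{\gamma-1}e(tp^{4})\,,\qquad R(t):=\sum_{p\leq X}\bigl(\psi(-(p+1)^{\gamma})-\psi(-p^{\gamma})\bigr)e(tp^{4})\,.$$
It then suffices to show that each of $\int_{0}^{1}|S_{0}|^{16}\,dt$ and $\int_{0}^{1}|R|^{16}\,dt$ is $\ll|\mathscr{P}|^{12+\delta}$.

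For $S_{0}(t)$, partial summation reduces matters to the prime exponential sum $\sum_{p\leq u}e(tp^{4})$, whose $16$-th mean value is dominated by the integer version via the classical Hua inequality:
$$\int_{0}^{1}\Bigl|\sum_{p\leq u}e(tp^{4})\Bigr|^{16}\,dt\leq\int_{0}^{1}\Bigl|\sum_{n\leq u}e(tn^{4})\Bigr|^{16}\,dt\ll u^{12+\delta}\,.$$
Combining this with the weight $p^{\gamma-1}\leq X^{\gamma-1}$ and integrating yields $\int_{0}^{1}|S_{0}(t)|^{16}\,dt\ll X^{16\gamma-4+\delta}$, and since $16\gamma-4\leq 12\gamma$ for $\gamma\leq 1$ and $|\mathscr{P}|\asymp X^{\gamma}/\log X$ by Lemma \ref{Shapiroasymp}, this is $\ll|\mathscr{P}|^{12+\delta}$.

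For the fractional-part piece $R(t)$, I would expand $\psi$ as a truncated Fourier series via Vaaler's approximation with truncation $H_{0}$, reducing $\int_{0}^{1}|R(t)|^{16}\,dt$ to a weighted sum over tuples $|h_{1}|,\dots,|h_{16}|\leq H_{0}$ of mean values of exponential sums whose phases mix the quartic term $tp^{4}$ with the Piatetski-Shapiro term $h_{i}p^{\gamma}$. Each such mean value is bounded by the van der Corput method or exponent pair techniques in the spirit of Lemma \ref{mathfrakSest} and the treatments in \cite{Dimitrov2022} and \cite{Rivat-Sargos}. The hard part will be this step: the optimal truncation $H_{0}$ and exponent pair must be chosen so that the $h$-summed estimate, together with the Vaaler tail error, remains within $|\mathscr{P}|^{12+\delta}$ uniformly in $t\in[0,1]$. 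Once this error contribution is controlled, assembling the two pieces establishes the claimed bound.
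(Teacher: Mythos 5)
Your approach diverges sharply from the paper's, and while the $S_0$ piece can be salvaged, the $R$ piece contains a genuine gap that you acknowledge but do not close.

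The paper's proof is entirely elementary and self-contained: it applies iterated Weyl differencing directly to the sparse set $\mathscr{P}$, writing $|S(t)|^{2}$, $|S(t)|^{4}$, and $|S(t)|^{8}$ in two dual ways as Fourier series $\sum_j c_j e(tj)$ and $\sum_j b_j e(tj)$, where the $c_j$ are controlled by divisor bounds ($c_j\ll\tau(j)\ll|\mathscr{P}|^{\varepsilon}$ for $j\neq0$, and $c_0\ll|\mathscr{P}|$) and the $b_j$ by the identities $\sum_j b_j=|\mathscr{P}|^{2^\ell}$, $b_0=\int_0^1|S|^{2^\ell}$. The crucial point is that after differencing, the variables $k_1,k_2,k_3$ each range over a set of size $\ll|\mathscr{P}|$, not $\ll X$, so the factor extracted at each Cauchy--Schwarz step is $|\mathscr{P}|$ rather than $X$. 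This is precisely how the sparsity of the Piatetski--Shapiro primes is exploited; the resulting chain $\int|S|^4\ll|\mathscr{P}|^{2+\varepsilon}\Rightarrow\int|S|^8\ll|\mathscr{P}|^{5+\varepsilon}\Rightarrow\int|S|^{16}\ll|\mathscr{P}|^{12+\varepsilon}$ requires no exponential sum technology and no Fourier-analytic decomposition of the indicator of $\mathscr{P}$.

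Your decomposition $S=S_0+R+O(\cdot)$ pushes all the difficulty into the remainder $R(t)=\sum_{p\leq X}(\psi(-(p+1)^\gamma)-\psi(-p^\gamma))e(tp^4)$. Two concrete problems: first, your stated bound $p^{\gamma-1}\leq X^{\gamma-1}$ is false since $\gamma-1<0$; what you need is partial summation, which does extract a factor $X^{\gamma-1}$ at the endpoint and, combined with $\|\Sigma_u\|_{L^{16}}\ll u^{3/4+\delta}$, gives $\int_0^1|S_0|^{16}\ll X^{16\gamma-4+\delta}\ll|\mathscr{P}|^{12+\delta}$, so that part can be fixed. Second, and more seriously, to control $\int_0^1|R|^{16}\,dt$ via a pointwise bound $R(t)\ll X^\beta$ and the $L^2$ computation $\int_0^1|R|^2\ll X^\gamma$, you would need $\beta\leq 11\gamma/14$, i.e. a bound like $R(t)\ll X^{0.786}$ \emph{uniformly} in $t\in[0,1]$ for a sum of length $X$ over all primes with a mixed phase $tp^4+hp^\gamma$ after Vaaler expansion. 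Lemma \ref{Omegaest} concerns $p^4\leq X$ (a much shorter range) and does not transfer, and nothing in Lemma \ref{mathfrakSest} or \cite{Rivat-Sargos} supplies this estimate. Without specifying which exponent pair, which truncation $H_0$, and how the resulting $h$-sum stays within budget, the $R$ piece remains an unproved assertion, and it is not clear that it can even be carried out in the range $\gamma>219/220$. The paper's Hua-iteration argument sidesteps this entirely by never separating the indicator of $\mathscr{P}$ into smooth and oscillatory parts.
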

\begin{proof}
Our argument is a modification of the argument of Long, Li, Zhang, and Sui (\cite{Long}, Lemma 2.2). 
On the one hand
\begin{align}\label{S^2est1}
|S(t)|^2&=\sum\limits_{p_1\in \mathscr{P}}\sum\limits_{p_2\in \mathscr{P}}e\big(t(p_1^4-p_2^4)\big)\nonumber\\
&=\sum\limits_{k}\sum\limits_{p\in \mathscr{P}\atop{p+k\in \mathscr{P}}}e\big(t((p+k)^4-p^4)\big)\nonumber\\
&=\sum\limits_{j}e(tj)\sum\limits_{k}\sum\limits_{p\in \mathscr{P}\atop{p+k\in \mathscr{P}\atop{4p^3k+6p^2k^2+4pk^3+k^4=j}}}1\nonumber\\
&=\sum\limits_{j}c_je(tj)\,,
\end{align}
where $c_j$ is the number of solutions of the equation
\begin{equation*}
k(4p^3+6p^2k+4pk^2+k^3)=j
\end{equation*}
with $p, p+k\in \mathscr{P}$ and $\sum_k1\ll|\mathscr{P}|$. Taking into account that for $j=0$ one has $k=0$ or $k=-2p$, and noting $p+k\in \mathscr{P}$, it follows that $k=0$. Thus
\begin{equation}\label{c0est}
c_0\ll|\mathscr{P}|\,. 
\end{equation}
For $j\neq0$, we have
\begin{equation}\label{cjest}
c_j\ll\tau(j)\ll\tau\big(\big|k(4p^3+6p^2k+4pk^2+k^3)\big|\big)\ll|\mathscr{P}|^\varepsilon\,.
\end{equation}
On the other hand
\begin{align}\label{S^2est2}
|S(t)|^2&=\sum\limits_{p_1\in \mathscr{P}}\sum\limits_{p_2\in \mathscr{P}}e\big(t(p_1^4-p_2^4)\big)\nonumber\\
&=\sum\limits_{j}e(tj)\mathop{\sum\limits_{p_1\in \mathscr{P}}\sum\limits_{p_2\in \mathscr{P}}}_{p_1^4-p_2^4=j}1\nonumber\\
&=\sum\limits_{j}b_je(tj)\,,
\end{align}
where $b_j$ is the number of solutions of the equation
\begin{equation*}
p_1^4-p_2^4=j
\end{equation*}
with $p_1, p_2\in \mathscr{P}$. It easy to see that
\begin{equation}\label{sumbjest}
\sum\limits_{j}b_j=|S(0)|^2=|\mathscr{P}|^2 
\end{equation}
and
\begin{equation}\label{b0est}
b_0=\int\limits_{0}^1|S(t)|^2\,dt=|\mathscr{P}|\,.
\end{equation}
Now \eqref{S^2est1} -- \eqref{b0est} imply 
\begin{equation}\label{IntS^4}
\int\limits_{0}^1|S(t)|^4\,dt=\sum\limits_{j}b_jc_j=b_0c_0+\sum\limits_{j\neq0}b_jc_j\ll|\mathscr{P}|^2+|\mathscr{P}|^\varepsilon\sum\limits_{j\neq0}b_j\ll|\mathscr{P}|^{2+\varepsilon}\,.
\end{equation}
Next, we proceed with the fourth power of $|S(t)|$. Using Cauchy's inequality, we write
\begin{align}\label{S^4est1}
|S(t)|^4&=\Bigg|\sum\limits_{k_1}\sum\limits_{p\in \mathscr{P}\atop{p+k_1\in \mathscr{P}}}e\big(t(4p^3+6p^2k_1+4pk_1^2+k_1^3)k_1\big)\Bigg|^2\nonumber\\
&\ll|\mathscr{P}|\sum\limits_{k_1}\Bigg|\sum\limits_{p\in \mathscr{P}\atop{p+k_1\in \mathscr{P}}}e\big(t(4p^3+6p^2k_1+4pk_1^2+k_1^3)k_1\big)\Bigg|^2\nonumber\\
&=|\mathscr{P}|\sum\limits_{k_1}\sum\limits_{p_1\in \mathscr{P}\atop{p_1+k_1\in \mathscr{P}}}\sum\limits_{p_2\in \mathscr{P}\atop{p_2+k_1\in \mathscr{P}}}
e\big(t(4p_1^3+6p_1^2k_1+4p_1k_1^2-4p_2^3-6p_2^2k_1-4p_2k_1^2)k_1\big)\nonumber\\
&=|\mathscr{P}|\sum\limits_{k_1}\sum\limits_{k_2}\sum\limits_{p,\, p+k_1\in \mathscr{P}\atop{p+k_2,\, p+k_1+k_2\in\mathscr{P}}}e\big(2t(6p^2+6pk_1+6pk_2+2k_1^2+3k_1k_2+2k_2^2)k_1k_2\big)\nonumber\\
&=|\mathscr{P}|\sum\limits_{j}e(tj)\sum\limits_{k_1}\sum\limits_{k_2}\sum\limits_{p,\, p+k_1\in \mathscr{P}\atop{p+k_2,\, p+k_1+k_2\in \mathscr{P}\atop{2k_1k_2(6p^2+6pk_1+6pk_2+2k_1^2+3k_1k_2+2k_2^2)=j}}}1\nonumber\\
&=|\mathscr{P}|\sum\limits_{j}c^\ast_je(tj)\,,
\end{align}
where $c^\ast_j$ is the number of solutions of the equation
\begin{equation*}
2k_1k_2(6p^2+6pk_1+6pk_2+2k_1^2+3k_1k_2+2k_2^2)=j
\end{equation*}
with $p, p+k_1, p+k_2, p+k_1+k_2 \in \mathscr{P}$, $\sum_{k_1}\ll|\mathscr{P}|$ and $\sum_{k_2}\ll|\mathscr{P}|$. 
Clearly, for $j=0$, there must hold $k_1=0$ or $k_2=0$. Otherwise, if $k_1k_2\neq0$, by noting the fact that the discriminant of the quadratic polynomial in $p$ is $-12(k_1^2+k_2^2)<0$,
one has
\begin{equation*}
6p^2+6pk_1+6pk_2+2k_1^2+3k_1k_2+2k_2^2>0\,,
\end{equation*}
which contradicts to $j=0$.
Therefore
\begin{equation}\label{cast0est}
c^\ast_0\ll|\mathscr{P}|^2\,. 
\end{equation}
For $j\neq0$, we obtain
\begin{equation}\label{castjest}
c^\ast_j\ll\tau_4(j)\ll\tau_4\big(\big|2k_1k_2(6p^2+6pk_1+6pk_2+2k_1^2+3k_1k_2+2k_2^2)\big|\big)\ll|\mathscr{P}|^\varepsilon\,.
\end{equation}
On the other hand
\begin{align}\label{S^4est2}
|S(t)|^4&=\sum\limits_{p_1\in \mathscr{P}}\sum\limits_{p_2\in \mathscr{P}}\sum\limits_{p_3\in \mathscr{P}}\sum\limits_{p_4\in \mathscr{P}}e\big(t(p_1^4+p_2^4-p_3^4-p_4^4)\big)\nonumber\\
&=\sum\limits_{j}e(tj)\mathop{\sum\limits_{p_1\in \mathscr{P}}\sum\limits_{p_2\in \mathscr{P}}\sum\limits_{p_3\in \mathscr{P}}\sum\limits_{p_4\in \mathscr{P}}}_{p_1^4+p_2^4-p_3^4-p_4^4=j}1\nonumber\\
&=\sum\limits_{j}b^\ast_je(tj)\,,
\end{align}
where $b^\ast_j$ is the number of solutions of the equation
\begin{equation*}
p_1^4+p_2^4-p_3^4-p_4^4=j
\end{equation*}
with $p_1, p_2, p_3, p_4 \in \mathscr{P}$. Apparently
\begin{equation}\label{sumbastjest}
\sum\limits_{j}b^\ast_j=|S(0)|^4=|\mathscr{P}|^4\,.
\end{equation}
In view of \eqref{IntS^4}, we get
\begin{equation}\label{bast0est}
b^\ast_0=\int\limits_{0}^1|S(t)|^4\,dt\ll|\mathscr{P}|^{2+\varepsilon}\,.
\end{equation}
Now \eqref{S^4est1} -- \eqref{bast0est} give us
\begin{align}\label{IntS^8}
\int\limits_{0}^1|S(t)|^8\,dt&\ll|\mathscr{P}|\sum\limits_{j}b^\ast_jc^\ast_j=|\mathscr{P}|b^\ast_0c^\ast_0+|\mathscr{P}|\sum\limits_{j\neq0}b^\ast_jc^\ast_j\nonumber\\
&\ll|\mathscr{P}|^{5+\varepsilon}+|\mathscr{P}|^{1+\varepsilon}\sum\limits_{j\neq0}b^\ast_j\ll|\mathscr{P}|^{5+\varepsilon}\,.
\end{align}
Next, we proceed with the eighth power of $|S(t)|$. Using \eqref{S^4est1} and Cauchy's inequality, we derive
\begin{align}\label{S^8est1}
|S(t)|^8&\ll|\mathscr{P}|^2\Bigg|\sum\limits_{k_1}\sum\limits_{k_2}\sum\limits_{p,\, p+k_1\in \mathscr{P}\atop{p+k_2,\, p+k_1+k_2\in\mathscr{P}}}e\big(2t(6p^2+6pk_1+6pk_2+2k_1^2+3k_1k_2+2k_2^2)k_1k_2\big)\Bigg|^2\nonumber\\
&\ll|\mathscr{P}|^4\sum\limits_{k_1}\sum\limits_{k_2}\Bigg|\sum\limits_{p,\, p+k_1\in \mathscr{P}\atop{p+k_2,\, p+k_1+k_2\in\mathscr{P}}}e\big(2t(6p^2+6pk_1+6pk_2+2k_1^2+3k_1k_2+2k_2^2)k_1k_2\big)\Bigg|^2\nonumber\\
&=|\mathscr{P}|^4\sum\limits_{k_1}\sum\limits_{k_2}\sum\limits_{p_1,\, p_1+k_1\in \mathscr{P}\atop{p_1+k_2,\, p_1+k_1+k_2\in\mathscr{P}}}\sum\limits_{p_2,\, p_2+k_1\in \mathscr{P}\atop{p_2+k_2,\, p_2+k_1+k_2\in\mathscr{P}}}\nonumber\\
&\times e\big(2t(6p_1^2+6p_1k_1+6p_1k_2-6p_2^2-6p_2k_1-6p_2k_2)k_1k_2\big)\nonumber\\
&=|\mathscr{P}|^4\sum\limits_{k_1}\sum\limits_{k_2}\sum\limits_{k_3}\sum\limits_{p,\, p+k_1,\, p+k_2,\, p+k_3\in \mathscr{P}\atop{p+k_1+k_2,\,p+k_1+k_3,\,p+k_2+k_3\in\mathscr{P}\atop{p+k_1+k_2+k_3\in\mathscr{P}}}}
e\big(12t(2p+k_1+k_2+k_3)k_1k_2k_3\big)\nonumber\\
&=|\mathscr{P}|^4\sum\limits_{j}e(tj)\sum\limits_{k_1}\sum\limits_{k_2}\sum\limits_{k_3}
\sum\limits_{p,\, p+k_1,\, p+k_2,\, p+k_3\in \mathscr{P}\atop{p+k_1+k_2,\,p+k_1+k_3,\,p+k_2+k_3\in\mathscr{P}\atop{p+k_1+k_2+k_3\in\mathscr{P}\atop{12k_1k_2k_3(2p+k_1+k_2+k_3)=j}}}}1\nonumber\\
&=|\mathscr{P}|^4\sum\limits_{j}\overline{c}_je(tj)\,,
\end{align}
where $\overline{c}_j$ is the number of solutions of the equation
\begin{equation*}
12k_1k_2k_3(2p+k_1+k_2+k_3)=j
\end{equation*}
with $p, p+k_1, p+k_2,  p+k_3, p+k_1+k_2, p+k_1+k_3, p+k_2+k_3, p+k_1+k_2+k_3\in \mathscr{P}$, $\sum_{k_1}\ll|\mathscr{P}|$, $\sum_{k_2}\ll|\mathscr{P}|$ and $\sum_{k_3}\ll|\mathscr{P}|$\,. 
Trivially, for $j=0$, by noting that  $p+k_1+k_2+k_3\in \mathscr{P}$, we deduce $k_1=0$ or $k_2=0$ or $k_3=0$. 
Hence
\begin{equation}\label{coverline0est}
\overline{c}_0\ll|\mathscr{P}|^3\,. 
\end{equation}
For $j\neq0$, we get
\begin{equation}\label{coverlinejest}
\overline{c}_j\ll\tau_5(j)\ll\tau_5\big(\big|12k_1k_2k_3(2p+k_1+k_2+k_3)\big|\big)\ll|\mathscr{P}|^\varepsilon\,.
\end{equation}
On the other hand
\begin{align}\label{S^8est2}
|S(t)|^8&=\sum\limits_{p_i\in \mathscr{P},\, i=1,\ldots,8}e\big(t(p_1^4+p_2^4+p_3^4+p_4^4-p_5^4-p_6^4-p_7^4-p_8^4)\big)\nonumber\\
&=\sum\limits_{j}e(tj)\mathop{\sum\limits_{p_1\in \mathscr{P}}\sum\limits_{p_2\in \mathscr{P}}\sum\limits_{p_3\in \mathscr{P}}\sum\limits_{p_4\in \mathscr{P}}
\sum\limits_{p_5\in \mathscr{P}}\sum\limits_{p_6\in \mathscr{P}}\sum\limits_{p_7\in \mathscr{P}}\sum\limits_{p_8\in \mathscr{P}}}_{p_1^4+p_2^4+p_3^4+p_4^4-p_5^4-p_6^4-p_7^4-p_8^4=j}1\nonumber\\
&=\sum\limits_{j}\overline{b}_je(tj)\,,
\end{align}
where $\overline{b}_j$ is the number of solutions of the equation
\begin{equation*}
p_1^4+p_2^4+p_3^4+p_4^4-p_5^4-p_6^4-p_7^4-p_8^4=j
\end{equation*}
with $p_1, p_2, p_3, p_4, p_5, p_6, p_7, p_8 \in \mathscr{P}$. Obviously 
\begin{equation}\label{sumboverlinejest}
\sum\limits_{j}\overline{b}_j=|S(0)|^8=|\mathscr{P}|^8\,.
\end{equation}
From \eqref{IntS^8}, we have
\begin{equation}\label{boverline0est}
\overline{b}_0=\int\limits_{0}^1|S(t)|^8\,dt\ll|\mathscr{P}|^{5+\varepsilon}\,.
\end{equation}
Now \eqref{S^8est1} -- \eqref{boverline0est} lead to
\begin{align*}
\int\limits_{0}^1|S(t)|^{16}\,dt&\ll|\mathscr{P}|^4\sum\limits_{j}\overline{b}_j\overline{c}_j=|\mathscr{P}|^4\overline{b}_0\overline{c}_0+|\mathscr{P}|^4\sum\limits_{j\neq0}\overline{b}_j\overline{c}_j\nonumber\\
&\ll|\mathscr{P}|^{12+\varepsilon}+|\mathscr{P}|^{4+\varepsilon}\sum\limits_{j\neq0}\overline{b}_j\ll|\mathscr{P}|^{12+\varepsilon}\,.
\end{align*}
This completes the proof of Lemma \ref{IntS^16}.

\end{proof}

\begin{corollary}\label{IntS4^16} From Lemma \ref{IntS^16} it follows that
\begin{equation*}
\int\limits_{0}^1|S_4(t)|^{16}\,dt\ll X^{4-\gamma+\delta}\,.
\end{equation*}
\end{corollary}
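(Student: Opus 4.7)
The strategy is to separate the arithmetic weight $p^{1-\gamma}\log p$ from the exponential sum and then invoke Lemma \ref{IntS^16} applied to the appropriate subset of Piatetski-Shapiro primes, combined with Lemma \ref{Shapiroasymp} to convert a cardinality bound into a bound in $X$.

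First I would introduce the unweighted companion
\begin{equation*}
S^\ast(t)=\sum_{\substack{\lambda_0X<p^4\le X\\ p=[n^{1/\gamma}]}}e(tp^4),
\end{equation*}
so that $S_4(t)=\sum_p w_p\,e(tp^4)$ with nonnegative weights $w_p=p^{1-\gamma}\log p$. Because every $p$ in the sum satisfies $(\lambda_0X)^{1/4}<p\le X^{1/4}$, one has $w_p\ll X^{(1-\gamma)/4}\log X$ uniformly. Expanding the $16$th power and integrating over $[0,1]$ kills all non-diagonal frequencies, giving
\begin{equation*}
\int_{0}^{1}|S_4(t)|^{16}\,dt=\sum_{\substack{p_1,\dots,p_{16}\\ p_1^4+\cdots+p_8^4=p_9^4+\cdots+p_{16}^4}}\prod_{i=1}^{16}w_{p_i}\ll \bigl(X^{(1-\gamma)/4}\log X\bigr)^{16}\int_{0}^{1}|S^\ast(t)|^{16}\,dt.
\end{equation*}

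Next I would apply Lemma \ref{IntS^16} to $S^\ast$. The proof of that lemma used only the cardinality of the underlying set of primes (through the bounds $\sum_k 1\ll|\mathscr{P}|$ and the trivial bound for the diagonal contributions), so the same argument yields
\begin{equation*}
\int_{0}^{1}|S^\ast(t)|^{16}\,dt\ll|\mathscr{P}^\ast|^{12+\delta},\qquad \mathscr{P}^\ast=\bigl\{p:(\lambda_0X)^{1/4}<p\le X^{1/4},\ p=[n^{1/\gamma}]\bigr\}.
\end{equation*}
Since $\tfrac{219}{220}<\gamma<1$ lies in the admissible range of Lemma \ref{Shapiroasymp}, that lemma gives $|\mathscr{P}^\ast|\ll X^{\gamma/4}/\log X$, whence $|\mathscr{P}^\ast|^{12+\delta}\ll X^{3\gamma+\delta}$.

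Combining the two estimates,
\begin{equation*}
\int_{0}^{1}|S_4(t)|^{16}\,dt\ll X^{4(1-\gamma)}(\log X)^{16}\cdot X^{3\gamma+\delta}\ll X^{4-\gamma+\delta},
\end{equation*}
after absorbing the logarithmic factor into $X^\delta$. The only place requiring a small amount of care is the verification that the proof of Lemma \ref{IntS^16} transfers verbatim to the subset $\mathscr{P}^\ast$ of $\{p\le X^{1/4}:p=[n^{1/\gamma}]\}$; since every step of that argument is formulated in terms of $|\mathscr{P}|$ alone, this is routine and constitutes the only potential pitfall.
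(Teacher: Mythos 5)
Your proposal is correct, and since the paper states the corollary without an explicit proof (presenting it as an immediate consequence of Lemma \ref{IntS^16}), the derivation you give is precisely the intended one: factor out the uniform weight bound $p^{1-\gamma}\log p\ll X^{(1-\gamma)/4}\log X$ from the diagonal count, apply Lemma \ref{IntS^16} with the set of Piatetski-Shapiro primes in $((\lambda_0X)^{1/4},X^{1/4}]$ in place of $\mathscr{P}$, and invoke Lemma \ref{Shapiroasymp} to bound that cardinality by $\ll X^{\gamma/4}/\log X$; the exponents then combine to $4(1-\gamma)+3\gamma=4-\gamma$. Your remark that the proof of Lemma \ref{IntS^16} only uses $|\mathscr{P}|$ (and that the divisor bounds $\tau_k(j)\ll|\mathscr{P}|^\varepsilon$ remain valid because $j$ is still polynomially bounded in $|\mathscr{P}^\ast|$) is exactly the right point to check, and it does go through.
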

Set
\begin{equation}\label{mathfrakS}
\mathfrak{S}(t,X)=\min\Big\{\big|S_1(\lambda_{1}t)\big|,\big|S_1(\lambda_2 t)\big|\Big\}\,.
\end{equation}
In view of \eqref{Gamma2}, \eqref{mathfrakS}, Lemma \ref{Fourier} and Lemma \ref{mathfrakSest}, we obtain
\begin{align}\label{Gamma2est1}
\Gamma_2(X_j)&\ll\varepsilon\int\limits_{\Delta\leq|t|\leq H}\mathfrak{S}(t, X_j)^\frac{1}{8}\big|S_1(\lambda_1 t)\big|^\frac{7}{8}\big|S_1(\lambda_2 t)\big|\big|S_4(\lambda_3 t)\big|\,dt\nonumber\\
&+\varepsilon\int\limits_{\Delta\leq|t|\leq H}\mathfrak{S}(t, X_j)^\frac{1}{8}\big|S_1(\lambda_1 t)\big|\big|S_1(\lambda_2 t)\big|^\frac{7}{8}\big|S_4(\lambda_3 t)\big|\,dt\nonumber\\
&\ll\varepsilon X_j^{\frac{37-12\gamma}{208}+\delta}\big(\Psi_1+\Psi_2\big)\,,
\end{align}
where
\begin{align}
\label{Psi1}
&\Psi_1=\int\limits_{\Delta}^H\big|S_1(\lambda_1 t)\big|^\frac{7}{8}\big|S_1(\lambda_2 t)\big|\big|S_4(\lambda_3 t)\big|\,dt\,,\\
&\Psi_2=\int\limits_{\Delta}^H\big|S_1(\lambda_1 t)\big|\big|S_1(\lambda_2 t)\big|^\frac{7}{8}\big|S_4(\lambda_3 t)\big|\,dt\,.\nonumber
\end{align}
We estimate only $\Psi_1$ and the estimation of $\Psi_2$ proceeds in the same way. Now \eqref{Psi1} and Hölder's inequality imply
\begin{equation}\label{Psi1est1}
\Psi_1\ll\Bigg(\int\limits_{\Delta}^H\big|S_1(\lambda_2t)\big|^2\,dt\Bigg)^\frac{1}{2}\Bigg(\int\limits_{\Delta}^H\big|S_1(\lambda_1t)\big|^2\,dt\Bigg)^\frac{7}{16}
\Bigg(\int\limits_{\Delta}^H\big|S_4(\lambda_3 t)\big|^{16}\,dt\Bigg)^\frac{1}{16}\,.
\end{equation}
Using Lemma \ref{intSintI} (i), we get 
\begin{equation}\label{DeltaH2}
\int\limits_{\Delta}^H\big|S_1(\lambda_kt)\big|^2\,dt\ll HX_j^{2-\gamma}\log X_j\,, \quad k=1, 2\,.
\end{equation}
From \eqref{Sk} and Corollary \ref{IntS4^16}, we wave
\begin{equation}\label{DeltaH4}
\int\limits_{\Delta}^H\big|S_4(\lambda_3t)\big|^{16}\,dt\ll HX_j^{4-\gamma+\delta}\,.
\end{equation}
Now \eqref{Psi1est1} -- \eqref{DeltaH4} lead to
\begin{equation}\label{Psi1est2}
\Psi_1\ll HX_j^{\frac{17-8\gamma}{8}+\delta}\,.
\end{equation}
Combining \eqref{varepsilon}, \eqref{H}, \eqref{Gamma2est1} and \eqref{Psi1est2}, we deduce 
\begin{equation}\label{Gamma2est2}
\Gamma_2(X_j)\ll X_j^{\frac{37-12\gamma}{208}+\delta}X_j^{\frac{17-8\gamma}{8}+\delta}=X_j^{\frac{479-220\gamma}{208}+\delta}\ll\frac{\varepsilon X_j^\frac{5}{4}}{\log X_j}\,.
\end{equation}

\section{Upper bound for $\mathbf{\Gamma_3(X)}$}\label{SectionGamma3}
\indent

From \eqref{Sk}, \eqref{Gamma3}, Lemma \ref{Fourier} and Lemma \ref{Shapiroasymp}, we derive
\begin{equation}\label{Gamma3est1}
\Gamma_3(X)\ll X^3\int\limits_{H}^{\infty}\frac{1}{t}\bigg(\frac{k}{2\pi t\varepsilon/8}\bigg)^k \,dt=\frac{X^3}{k}\bigg(\frac{4k}{\pi\varepsilon H}\bigg)^k\,.
\end{equation}
Choosing $k=[\log X]$ from \eqref{H} and \eqref{Gamma3est1}, we obtain
\begin{equation}\label{Gamma3est}
\Gamma_3(X)\ll1\,.
\end{equation}

\section{Proof of the Theorem}\label{Sectionfinal}
\indent

Summarizing  \eqref{varepsilon}, \eqref{Gammadecomp}, \eqref{Gamma1est}, \eqref{Gamma2est2} and \eqref{Gamma3est}, we get 
\begin{equation*}
\Gamma(X_j)\gg\varepsilon X_j^\frac{5}{4}=X_j^{\frac{479-220\gamma}{208}+\theta}\,.
\end{equation*}
The last estimation implies
\begin{equation}\label{Lowerbound}
\Gamma(X_j) \rightarrow\infty \quad \mbox{ as } \quad X_j\rightarrow\infty\,.
\end{equation}
Bearing in mind \eqref{Gamma} and \eqref{Lowerbound} we establish Theorem \ref{Theorem}.

\vskip30pt
\footnotesize
\begin{flushleft}
S. I. Dimitrov\\
\quad\\
Faculty of Applied Mathematics and Informatics\\
Technical University of Sofia \\
Blvd. St. Kliment Ohridski 8 \\
Sofia 1000, Bulgaria\\
e-mail: sdimitrov@tu-sofia.bg\\
\end{flushleft}

\begin{flushleft}
Department of Bioinformatics and Mathematical Modelling\\
Institute of Biophysics and Biomedical Engineering\\
Bulgarian Academy of Sciences\\
Acad. G. Bonchev Str. Bl. 105, Sofia 1113, Bulgaria \\
e-mail: xyzstoyan@gmail.com\\
\end{flushleft}

\end{document}